\newtheorem{assumption}{Assumption}[section]{\bf}{\rm}
\newtheorem{definition}{Definition}[section]
\newtheorem{remark}{Remark}[section]
\newtheorem{lemma}{Lemma}[section]
\newtheorem{theorem}{Theorem}[section]
\newtheorem{proposition}{Proposition}[section]
\newtheorem{corollary}{Corollary}[section]
\crefname{assumption}{Asm.}{Assumptions}
\crefname{definition}{Def.}{Definitions}
\crefname{lemma}{Lem.}{Lemmata}
\crefname{remark}{Rem.}{Remarks}
\crefname{proposition}{Prop.}{Propositions}
\crefname{theorem}{Thm.}{Theorems}
\newcommand{\R}{\mathbb{R}}
\renewcommand{\epsilon}{\varepsilon}
\newcommand{\N}{\mathbb N}
\newcommand{\eps}{\epsilon}
\newcommand{\sL}{{\mathsf{L}\!}}
\newcommand{\sBV}{{\mathsf{BV}}}
\newcommand{\sW}{\mathsf W}
\newcommand{\sC}{\mathsf C}
\newcommand{\sTV}{\mathsf{TV}}
\newcommand{\mEF}{\mathcal{EF}}
  \newcommand{\sgn}{\ensuremath{\textnormal{sgn}}}
\newcommand{\e}{\mathrm e}
\newcommand{\equivd}{\ensuremath{\vcentcolon\equiv}}
\newcommand{\OT}{{\Omega_{T}}}
\newcommand{\loc}{\textnormal{loc}}
\newcommand{\dd}{\ensuremath{\,\mathrm{d}}}
\DeclareMathOperator{\supp}{supp}
\DeclareMathOperator*{\esssup}{ess-\sup}
\DeclareMathOperator*{\essinf}{ess-\inf}
\newcommand{\cW}{\ensuremath{\mathcal{W}}}
\newcommand{\uv}{\underline{v}}
\begin{document}
\title{On the singular limit problem for a discontinuous nonlocal conservation law}
\author{Alexander Keimer\thanks{Friedrich-Alexander-University Erlangen-Nürnberg (FAU), Department of Mathematics, Cauerstraße 11, 91058 Erlangen, Germany}, Lukas Pflug\thanks{Friedrich-Alexander-University Erlangen-Nürnberg (FAU), Department of Mathematics, Cauerstraße 11\ \& Competence Unit for Scientific Computing (CSC), Martensstraße 5a, 91058 Erlangen,} 
}
\date{\today}

\maketitle


\begin{abstract}
In this contribution we study the singular limit problem of a nonlocal conservation law with a discontinuity in space. The specific choice of the nonlocal kernel involving the spatial discontinuity as well enables it to obtain a maximum principle for the nonlocal equation. The corresponding local equation can be transformed diffeomorphically to a classical scalar conservation law where the well-know Kru\v{z}kov theory can be applied. However, the nonlocal equation does not scale that way which is why the study of convergence is interesting to pursue. For exponential kernels of the nonlocal operator, we establish the converge to the corresponding local equation under mild conditions on the involved discontinuous velocity. We illustrate our results with some numerical examples.
\end{abstract}
\begin{description}
\item[Keywords:]Nonlocal Conservation Law, Discontinuous nonlocal conservation law, Existence, Uniqueness, Singular limit, Entropy solution, Convergence nonlocal to local
\item[Mathematics Subject Classification:] 35L65, 35L99, 34A36
\end{description}

\maketitle

\section{Introduction}
Nonlocal conservation laws have received increasing attention over last decade. This not only due to in many cases more realistic description of dynamical phenomena\cite{armbruster,coron,scialanga,blandin2016well,keimer2,colombo_nonlocal,amadori,aggarwal,piccoli2018sparse,pflug2020emom,colombo2015nonlocal,rossi2020well,bayen2020modeling}, but also for their mathematical properties of not requiring an Entropy condition for uniqueness, etc. \cite{pflug,spinola,pflug3,crippa2013existence,colombo,kloeden,lorenz2020viability,lorenz2019nonlocal}. One problem which has been open for this decade consists of the singular limit problem which can roughly been translated into whether the unique weak solution of the nonlocal conservation law converges to the weak entropy solution of the corresponding local conservation law when the nonlocal weight converges in some sense to a Dirac distribution \cite{teixeira,spinolo}.

And indeed, for specific kernels and sign restricted initial datum this can be answered positively \cite{pflug4,coclite2022general,COLOMBO20211653,bressan2021entropy,bressan2019traffic,keimer42,colombo2022nonlocal}.
A related question consists of whether one can obtain a similar result when the nonlocal conservation laws has a discontinuity in space and this is what we will tackle in this contribution. 
Discontinuous conservation laws and existence and uniqueness of solutions have only been studied recently in \cite{chiarello2021existence,chiarello2021nonlocal,keimer2021discontinuous}.

For a discontinuous nonlocal conservation law as introduced in \cref{defi:discontinuous_nonlocal_conservation_law} and where the spatial discontinuous function does not only act multiplicatively on the velocity but also appears in the nonlocal term as a weight, we establish the existence and uniqueness of solutions as well as a maximum principle and stability results when smoothing initial datum and discontinuous part.

We also identify the corresponding local discontinuous conservation law in \cref{defi:local_conservation_laws}. However, this local conservation law can be dealt with by transforming the spatial variable diffeomorphically to end up with a classical local conservation law without discontinuity which inspires the definition of the weak solution for the discontinuous local conservation law in \cref{defi:local_conservation_laws} and which gives existence and uniqueness of solutions. Under specific assumptions on the discontinuous part of the velocity that it is OSL (one-sided Lipschitz-continuous) from above, we show the convergence of the nonlocal discontinuous conservation law to the local solution when the nonlocal weight converges to a Dirac distribution. 
It thus is the first result on any kind of singular limit convergence of discontinuous nonlocal conservation laws to their local counterpart while -- as detailed later in \cref{subsec:literature} -- there exists a significant interested for discontinuous local conservation laws.

In equations, we assume for the remainder of this contribution the following:
\begin{assumption}[Input datum]\label{ass:input_datum}
We assume that\\[-20pt]
\begin{multicols}{2}
\begin{itemize}
\item \(v\in\sL^{\infty}(\R;\R_{\geq\uv}) \cap \sTV(\R)\) for  \(\uv\in\R_{>0}\)
\item \(q_{0}\in\sL^{\infty}(\R;\R_{\geq0})\cap \sL^{1}(\R)\)
\item \(V\in\sW^{2,\infty}_{\loc}(\R):\) \(V'\leqq 0\)
\item \(\eta\in\R_{>0}\)
\end{itemize}
\end{multicols}
\end{assumption}
and study a nonlocal approximation of the following set of discontinuous (local) conservation laws (where the nonlocal version of that is found in \cref{defi:discontinuous_nonlocal_conservation_law}).
\begin{definition}[Discontinuous (local) conservation laws]\label{defi:discontinous_local}
For \(v\in\sL^{\infty}(\R;\R_{\geq\uv})\cap \sTV(\R),\ \uv\in\R_{>0}\),
the local conservation law with discontinuity in space considered in this work consists of the following Cauchy problem in the density \(q:\OT\times\R\)
\begin{align}
q_{t}(t,x)+\partial_{x}\big(V\big(v(x)q(t,x)\big)v(x)q(t,x)\big)&=0,&& (t,x)\in\OT\label{eq:q}\\
q(0,x)&=q_{0}(x), && x\in\R
\intertext{which is formally equivalent (setting \(\rho\equiv v\cdot q\) and multiplying with \(v\)) to }
\rho_{t}(t,x)+ v(x) \partial_{x}f(\rho(t,x))&=0,&& (t,x)\in\OT\label{eq:local_2}\\
\rho(0,x)&=v(x)q_{0}(x),&& x\in\R
\intertext{and can be transformed via the spatial transform \(\rho(t,x)=p\big(t,F(x)\big),\ (t,x)\in\OT\) into}
p_{t}(t,x)+\partial_{x}f(p(t,x))&=0,&& (t,x)\in\OT\label{eq:p}\\
p(0,x)&=v\big(F^{-1}(x)\big)q_{0}\big(F^{-1}(x)\big), && x\in\R
\end{align}
with \(F(x)\coloneqq \int_{0}^{x}\tfrac{1}{v(y)}\dd y\) and \(f(x)= xV(x)\ \forall x\in\R\) for a given \(V\in W^{1,\infty}_{\loc}(\R)\).
\end{definition}
The corresponding nonlocal discontinuous equation then reads as
\begin{definition}[The nonlocal discontinuous conservation law]\label{defi:discontinuous_nonlocal_conservation_law}
The nonlocal conservation law in \(q:\OT\rightarrow\R\) has the following form 
\begin{align*}
    q_{t}(t,x)+\partial_{x}\Big(V\big(\cW[v\cdot q](t,x)\big)v(x)q(t,x) \Big)&=0 , &&(t,x)\in\OT\\
    q(0,x)&=q_{0}(x), &&x\in\R
\end{align*}
with datum as in \cref{ass:input_datum} and the nonlocal operator \(\cW\) being defined as
\begin{align}
    \cW[v\cdot q](t,x)\coloneqq\tfrac{1}{\eta}\int_{x}^{\infty}\exp\big(\tfrac{x-y}{\eta}\big)v(y)q(t,y)\dd y,&& (t,x)\in\OT.\label{defi:nonlocal_term}
\end{align}
We call  \(q_{\eta}:\OT\rightarrow\R\) the solution of the nonlocal conservation law. 
\end{definition}
\begin{remark}[Scaling of the nonlocal conservation law as suggested in \cref{defi:discontinous_local}]\label{rem:scaling_nonlocal}
By setting \(\rho_{\eta}\coloneqq v(x)q_{\eta}(t,x)\) we obtain analogously to \cref{eq:local_2} its nonlocal approximation
\begin{align}
    \partial_{t}\rho_{\eta}(t,x)+v(x)\partial_{x}\Big(V\big(\cW[\rho_{\eta}](t,x)\big)\rho_{\eta}(t,x) \Big)&=0,\ &&(t,x)\in\OT\label{eq:rho_eta}
\end{align}
but by setting \(\rho_{\eta}(t,x)=p_{\eta}\Big(t,\int_{0}^{x}\tfrac{1}{v(y)}\dd y\Big),\ (t,x)\in\OT\) we do not get the analogue to \cref{eq:p} as the nonlocal term is not scaling invariant under diffeomorphisms.
\end{remark}
\begin{remark}[The nonlocal kernel in \cref{defi:nonlocal_term}]\label{rem:nonlocal_kernel}
Note that in \cref{defi:nonlocal_term} we have used as nonlocal kernel the exponential kernel. Although some of the later presented results will hold for more general kernels which are monotonically decreasing and of \(\sTV\) regularity (compare in particular \cite{keimer2021discontinuous}, the singular limit analysis strongly relies similarly to \cite{coclite2022general} on the exponential term. This is the reason why we consider from the beginning only this kernel. However, following some ideas in \cite{colombo2022nonlocal} and \cite{keimer42} it might be possible to generalize to a broader set of kernels.
\end{remark}

\subsection{Further related literature}\label{subsec:literature}
The literature on discontinuous local conservation laws is vast. For a short overview, see \cite{buerger2008conservation} where the density dependent flux function changes its form discontinuously dependent on the spatial location. To show, however, that the study of discontinuities in local conservation laws has drawn significant attention over last decades, we shortly revisit some of the considered problems. \cite{gimse1993conservation}, the flux itself is (at finitely many points) discontinuous with respect to the density, and Riemann problem as well as existence and uniqueness of solutions is studied. In \cite{towers2000convergence} a finite difference scheme for conservation laws is introduced where the discontinuity is a space-dependent function and enters the flux in a multiplicative way. \cite{diehl_1996_conservation,diehl_1995_scalar} considers a conservation law with discontinuous (in the spatial variable) flux function and a Dirac measure as right hand side modelling sedimentation, and establishes the existence and uniqueness of solutions by a generalized entropy condition. For modelling oil reservoirs  \cite{gimse_1992_solution} considers another discontinuous (in the spatial variable) conservation law with wave front tracking.
Eventually, \cite{adimurthi2004godunov,adimurthi2007explicit,mishra2007existence} consider a variety of different discontinuous conservation laws with discontinuities in the density dependent flux and the spatial coordinate.

\section{The (local) discontinuous conservation laws considered}
Now, we focus first on deriving a suitable definition for solutions of the discontinuous (local) conservation law in \cref{eq:q}.
Having identified the local conservation law where we expect the nonlocal solution to converge to, i.e., \cref{eq:q}, we state it as follows:
\begin{definition}[The local Cauchy problem]\label{defi:local_conservation_laws}
Let \cref{ass:input_datum} be given and define the flux as \(f(x)\coloneqq xV(x)\ \forall x\in\R,\) we consider
\begin{align*}
    q_{t}(t,x)+ \partial_{x}f\big(v(x)q(t,x)\big)&=0, && (t,x)\in\OT\\
    q(0,x)&=q_{0}(x),    && x\in\R
\end{align*}    
and call \(q\) (the existence is handled in \cref{lem:existence_entropy} later) the solution to the \textbf{local discontinuous conservation law}.
\end{definition}
First, we will need to define what we mean with solutions in this ``non-standard'' form. Notice that one could indeed aim for reformulating this problem in non-conservative form, however this requires additional regularity on \(v\).
This is, why we define the solution as follows using the gained insights in \cref{defi:discontinous_local} which enable the reformulation of the problem into a classical (local) conservation law.
\begin{definition}[Entropy solution]\label{defi:entropy_solution}
We call \(q\in \sC\big([0,T];\sL^{1}(\R)\big)\cap\sL^{\infty}((0,T);\sL^{\infty}(\R))\) a \textbf{weak Entropy solution} of the discontinuous conservation law in \cref{defi:local_conservation_laws} iff it satisfies the following Entropy inequality
for all \(\alpha\in\sW^{2,\infty}_{\loc}(\R)\) being convex, \(\beta\in \sW^{1,\infty}_{\loc}(\R)\) with \(\beta'\equiv\alpha'\cdot f'\) and for all \(\phi\in \sW^{1,\infty}_{\text{c}}\big((-42,T)\times\R;\R_{\geq0}\big)\)
\begin{equation}
\begin{gathered}
\mathcal{E}[\phi,\alpha,q]\coloneqq\iint_{\OT}\alpha(q(t,x)v(x))\tfrac{1}{v(x)}\phi_{t}(t,x)+\beta(q(t,x)v(x))\phi_{x}(t,x)\dd x\dd t\\
+\int_{\R}\alpha(q_{0}(x)v(x))\phi(0,x)\tfrac{1}{v(x)}\dd x\geq 0.
\end{gathered}
\label{eq:Entropy}
\end{equation}
\end{definition}
\begin{remark}[Lipschitz-continuous test functions]
In \cref{defi:entropy_solution} we have chosen test functions with compact support which are Lipschitz-continuous instead of smooth test functions. However, this is not a restriction as these functions can be approximated by smooth functions in  \(L^{1}_{\loc}\). The reason why we chose the larger set of test functions is due to the steps required in \cref{lem:existence_entropy} where we can only make sure that the test functions belong to the suggested Lipschitz class.
\end{remark}
Next, we prove that under the given and adjusted entropy condition in \cref{eq:Entropy} in \cref{defi:entropy_solution}, there exists a unique solution to the local conservation law in \cref{defi:local_conservation_laws}.
\begin{lemma}[Existence and uniqueness of Entropy solutions]\label{lem:existence_entropy}
The Cauchy problem in \cref{defi:local_conservation_laws} has a unique Entropy solution in the sense of \cref{defi:entropy_solution}.
\end{lemma}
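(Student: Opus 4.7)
The plan is to exploit the diffeomorphic spatial transform already identified in \cref{defi:discontinous_local} and reduce the discontinuous Cauchy problem to a classical conservation law to which Kru\v{z}kov's theory applies directly. Concretely, $F(x)=\int_{0}^{x}\tfrac{1}{v(y)}\dd y$ satisfies $F'(x)=1/v(x)\in[1/\|v\|_{\sL^{\infty}(\R)},1/\uv]$ almost everywhere, so $F\colon\R\to\R$ is a bi-Lipschitz homeomorphism. I set $p(t,y)\coloneqq v(F^{-1}(y))\,q(t,F^{-1}(y))$, with initial datum $p_{0}(y)\coloneqq v(F^{-1}(y))\,q_{0}(F^{-1}(y))\in\sL^{\infty}(\R)\cap\sL^{1}(\R)$; the associated Cauchy problem is the standard $p_{t}+\partial_{y}f(p)=0$, to which Kru\v{z}kov applies since $f(x)=xV(x)$ is locally Lipschitz by $V\in\sW^{1,\infty}_{\loc}(\R)$.

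The key identity is that the twisted Entropy inequality \cref{eq:Entropy} pulls back to the classical Kru\v{z}kov inequality under $y=F(x)$. For a nonnegative $\phi\in\sW^{1,\infty}_{\text{c}}$ I set $\psi(t,y)\coloneqq\phi(t,F^{-1}(y))$; then $\psi\in\sW^{1,\infty}_{\text{c}}$ by Lipschitz-continuity of $F^{-1}$, and one has $\phi_{x}(t,x)=\tfrac{1}{v(x)}\psi_{y}(t,F(x))$ together with $\dd y=\tfrac{1}{v(x)}\dd x$. A direct substitution shows that the three summands of $\mathcal{E}[\phi,\alpha,q]$ transform respectively into $\iint_{\OT}\alpha(p)\psi_{t}\dd y\dd t$, $\iint_{\OT}\beta(p)\psi_{y}\dd y\dd t$, and $\int_{\R}\alpha(p_{0})\psi(0,\cdot)\dd y$, so entropy solutions in the sense of \cref{defi:entropy_solution} correspond bijectively with classical Kru\v{z}kov entropy solutions of $p_{t}+\partial_{y}f(p)=0$.

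For existence I invoke Kru\v{z}kov's theorem to obtain a unique $p\in\sC([0,T];\sL^{1}(\R))\cap\sL^{\infty}((0,T);\sL^{\infty}(\R))$, define $q(t,x)\coloneqq p(t,F(x))/v(x)$, and read off the claimed regularity of $q$ from the bi-Lipschitz bounds on $F$ together with $v\geq\uv$. For uniqueness I take two entropy solutions $q_{1},q_{2}$ of the original problem, pass to $p_{i}(t,y)\coloneqq v(F^{-1}(y))\,q_{i}(t,F^{-1}(y))$, observe via the above pull-back identity that both $p_{i}$ solve the same Kru\v{z}kov Cauchy problem, and conclude $p_{1}=p_{2}$ hence $q_{1}=q_{2}$ by Kru\v{z}kov uniqueness.

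The main technical nuisance is the test-function class: because $F^{-1}$ is only Lipschitz, composition does not preserve $\sC^{\infty}$-smoothness, so one must work with Kru\v{z}kov's inequality in its $\sW^{1,\infty}_{\text{c}}$ formulation, which is equivalent to the smooth one by $\sL^{1}_{\loc}$-density of $\sC_{\text{c}}^{\infty}$ in $\sW^{1,\infty}_{\text{c}}$. This is precisely the reason the authors enlarged the admissible class in \cref{defi:entropy_solution}; once this subtlety is handled, the rest is a routine change-of-variables calculation.
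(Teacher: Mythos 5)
Your proposal is correct and follows essentially the same route as the paper: both reduce the discontinuous problem to the classical Cauchy problem $p_{t}+\partial_{x}f(p)=0$ via the bi-Lipschitz transform $F$, show that the twisted entropy inequality \cref{eq:Entropy} is exactly the Kru\v{z}kov inequality for $p$ under the substitution $y=F(x)$ (with test functions transported by composition with $F$ or $F^{-1}$), and then invoke the classical existence and uniqueness theory. Your closing observation about the Lipschitz test-function class is precisely the point the authors make in their remark following \cref{defi:entropy_solution}.
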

\begin{proof}
We perform a substitution in the entropic formulation of solutions in \cref{eq:Entropy} and have -- recalling that instead of \(\phi\) we can also plug in as test-function \((t,x)\mapsto\phi(t,F(x))\) thanks to the regularity of \(\phi\) and \(F(x)\coloneqq\int_{0}^{x}\tfrac{1}{v(y)}\dd y\ \forall x\in\R\) -- \(v\) is positive and bounded away from zero so that \(F\) is a strictly monotone function and invertible as well as weakly differentiable with the inverse function also differentiable)
\begin{align*}
    0&\leq E\Big[\phi\big(\cdot,F(\ast)\big),\alpha,q\Big]\\
    &=\iint_{\OT}\alpha(q(t,x)v(x))F'(x)\phi_{t}(t,F(x))+\beta(q(t,x)v(x))\partial_{x}\phi(t,F(x))\dd x\dd t\\
    &\qquad +\int_{\R}\alpha(q_{0}(x)v(x))\phi(0,F(x))F'(x)\dd x\\
    &=\iint_{\OT}\alpha(q(t,x)v(x)){F}'(x)\phi_{t}(t,F(x))+\beta(q(t,x)v(x))\partial_{2}\phi(t,F(x))F'(x)\dd x\dd t\\
    &\qquad +\int_{\R}\alpha(q_{0}(x)v(x))\phi(0,F(x))F'(x)\dd x
    \intertext{and performing for \(x\in\R\) the substitution \(y=F(x)\)}
    &=\iint_{\OT}\alpha\Big(q\big(t,F^{-1}(y)\big)v\big(F^{-1}(y)\big)\Big)\phi_{t}(t,y) + \beta\Big(q\big(t,F^{-1}(y)\big)v\big(F^{-1}(y)\big)\Big)\partial_{y}\phi(t,y)\dd y\\
    &\qquad +\int_{\R}\alpha\Big(q_{0}\big(F^{-1}(y)\big)v\big(F^{-1}(y)\big)\Big)\phi(0,y)\dd y
\end{align*}
we observe that the last expression states the definition of Entropy solutions for the (spatially independent) conservation law in \(p\equiv v\big(F^{-1}(\cdot)\big) q\big(\ast,F^{-1}(\cdot)\big)\)
\begin{align*}
\partial_{t}p(t,x)+\partial_{x}f(p(t,x))&=0, &&(t,x)\in\OT\\
p(0,x)&=v\big(F^{-1}(x)\big)q_{0}\big(F^{-1}(x)\big), && x\in\R.
\end{align*}
However, for the equation in \(p\) we know that it admits a unique solution by the classical theorems \cite[Theorem 6.3]{bressan}, \cite[Theorem 19.1]{eymard},
 \cite[Theorem 2, Theorem 5, Section 5 Item 4]{kruzkov}, \cite{godlewski} and as \(F\in\sW^{1,\infty}_{\loc}(\R)\) with \(F'\in\sL^{\infty}(\R;\R_{\geq v})\) and \(v\in\sL^{\infty}(\R;\R_{\geq \uv})\) this uniqueness carries over to \(q\) as well which concludes the proof.
\end{proof}
Roughly speaking, thanks to the spatial transform \(F\) we were capable of reinterpreting the discontinuous Cauchy problem in \cref{defi:local_conservation_laws} as a spatially independent classical conservation law which we illustrated formally already in \cref{defi:discontinous_local} and \cref{eq:p}.
\begin{lemma}[Strictly concave/convex flux and Entropy condition]\label{lem:strict_concave_flux}
Assume that \(f\) as in \cref{defi:local_conservation_laws},  i.e.,\ \(f(x)=xV(x)\ \forall x\in\R\) is strictly concave, i.e.,\ 
\[
xV''(x)+2V'(x)<0,\  x\in \big(\essinf_{x\in\R} q_{0}(x),\|q_{0}\|_{\sL^{\infty}(\R)}\big) \text{ a.e.}.
\]
Then, for the uniqueness result in \cref{lem:existence_entropy} to hold it is enough that the Entropy inequality in \cref{defi:entropy_solution}, namely \cref{eq:Entropy}, holds for \textbf{one} strictly convex Entropy flux pair, for instance for 
\[
\alpha(x)=x^{2},\quad \forall x\in\R.
\]
\end{lemma}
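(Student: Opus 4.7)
The plan is to re-use the diffeomorphic change of variables \(y=F(x)\) from the proof of \cref{lem:existence_entropy}, which transforms the discontinuous Cauchy problem into the spatially independent scalar law \(p_{t}+\partial_{x}f(p)=0\) with initial datum \(v(F^{-1}(\cdot))q_{0}(F^{-1}(\cdot))\). The substitution performed there did not use any structural property of the pair \((\alpha,\beta)\) beyond the relation \(\beta'=\alpha' f'\), so the single entropy inequality for one specific convex \(\alpha\) in the original coordinates is equivalent to the same single entropy inequality in the transformed coordinates. Since the transformation is a Lipschitz diffeomorphism with \(v\) bounded above and below away from zero, strict concavity of \(f\) on the essential range of the original solution carries over to strict concavity on the essential range of \(p\).

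Having reduced matters to a classical scalar conservation law with strictly concave, hence genuinely nonlinear, flux, I would invoke the well-known fact that for such fluxes a single strictly convex entropy already pins down the unique Kru\v{z}kov solution. This goes back to Panov (with earlier contributions in a slightly different form by DiPerna): the entropy inequality for a single \(\alpha\) with \(\alpha''>0\) and \(\beta'=\alpha' f'\) implies the one-sided Lipschitz estimate of Oleinik on \(f'(p)\), from which the full Kru\v{z}kov family of entropy inequalities, together with the \(\sL^{1}\) contraction and uniqueness, can be recovered in the genuinely nonlinear regime. The specific choice \(\alpha(x)=x^{2}\) clearly satisfies \(\alpha''\equiv 2>0\) on all of \(\R\), so it is an admissible strictly convex entropy.

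The main obstacle is verifying that the strict-concavity window assumed in the statement, namely \(\big(\essinf_{x\in\R} q_{0}(x),\|q_{0}\|_{\sL^{\infty}(\R)}\big)\), genuinely covers the essential range of the transformed solution \(p\). For the classical law \(p_{t}+\partial_{x}f(p)=0\) the \(\sL^{\infty}\) maximum principle yields \(p(t,\cdot)\in\big[\essinf(v q_{0}),\|v q_{0}\|_{\sL^{\infty}(\R)}\big]\), so the hypothesis has to be combined with the maximum principle established earlier in the paper (which controls the range of \(q\), and thus of \(vq\) through the uniform two-sided bounds on \(v\)). Once this range matching is in place, the genuinely nonlinear single-entropy uniqueness theorem applied to the transformed equation, together with the invertibility of \(F\), transfers uniqueness back to \(q\) and closes the argument.
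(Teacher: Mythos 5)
Your proposal is correct and follows essentially the same route as the paper, which likewise performs the diffeomorphic transform from the proof of \cref{lem:existence_entropy} and then cites the single-convex-entropy uniqueness results of Panov and De Lellis--Otto--Westdickenberg for genuinely nonlinear fluxes. Your additional remark that the concavity window in the statement should be matched (via the maximum principle and the two-sided bounds on \(v\)) to the essential range of the transformed solution \(p\), i.e.\ to the range of \(v\cdot q_{0}\), is a valid point of care that the paper's one-line proof glosses over.
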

\begin{proof}
The proof is a minor modification (with the outlined transforms in the proof of \cref{lem:existence_entropy}) of the results in \cite{westdickenberg,panov1994uniqueness}.
\end{proof}
\section{Nonlocal discontinuous conservation laws}
In this section, we introduce the nonlocal conservation law with spatial discontinuity which we will later prove to converge to the entropy solution in \cref{defi:entropy_solution} of the local conservation law as stated in \cref{defi:local_conservation_laws}.
In \cite{keimer2021discontinuous} we have shown the well-posedness of a discontinuous (in space) nonlocal conservation law of the form
\begin{equation}
\begin{aligned}
    q_{t}(t,x)+\partial_{x}\Big(v(x)V\big(\cW_{\eta}[q](t,x)\big)q(t,x) \Big)&=0,&& (t,x)\in\OT\\
    q(0,x)&=q_{0}(x),&& x\in\R\\
\cW_{\eta}[q](t,x)&\coloneqq\tfrac{1}{\eta}\int_{x}^{\infty}\exp\big(\tfrac{x-y}{\eta}\big)q(t,y)\dd y&& (t,x)\in\OT
\end{aligned}
\label{eq:nonlocal_conservation_law}
\end{equation}
with \(v\in\sL^{\infty}(\R;\R_{\geq\uv})\cap\sTV(\R),\ \uv\in\R_{>0}\) being the \textbf{discontinuous} part of the velocity, \(V\in\sW^{1,\infty}_{\loc}(\R)\) the \textbf{Lipschitz-continuous} part of the velocity and the nonlocal operator \(\cW_{\eta}\) with looking ahead parameter \(\eta\in\R_{>0}\). The exponential weight used here could -- as outlined before -- be replaced by another nonlocal weight being monotonically decreasing and total variation bounded, however, in this contribution, we stick to the exponential weight from the beginning and leave it to the reader to potentially generalize in the spirit of \cref{rem:nonlocal_kernel}.

The proposed dynamics has the disadvantage that classical maximum principles only hold in rather restricted setups (compare \cite[Theorem 3.3]{keimer2021discontinuous}). However, this changes significantly if we consider instead of the nonlocal operator \(\cW_{\eta}[q]\) the one which is enriched by the spatially discontinuous \(v\), namely \(\cW_{\eta}[v\cdot q]\).

But, given this different type of nonlocal conservation laws, the classical existence and uniqueness theorem as outlined in \cite[Theorem 3.1]{keimer2021discontinuous} cannot be applied directly. However, required adjustments are minor and we will thus state the existence and uniqueness result as well as an approximation result without proof.
For the sake of completeness, let us first define what we mean with solutions to the suggested nonlocal dynamics:
\begin{definition}[The discontinuous nonlocal conservation law and its weak solution]\label{defi:weak_solution_2}
Let \cref{ass:input_datum} be given, we call \(q\in \sC\big([0,T]; \sL^{1}(\R)\big)\cap\sL^{\infty}\big((0,T); \sL^{\infty}(\R)\big)\) a weak solution of \cref{defi:discontinuous_nonlocal_conservation_law}, namely
    \begin{equation}
\begin{aligned}
    q_{t}(t,x)&=-\partial_{x}\Big(v(x)V\big(\cW_{\eta}[v\cdot q](t,x)\big)q(t,x) \Big), && (t,x)\in\OT\\
    q(0,x)&=q_{0}(x),&&x\in\R\\
\cW_{\eta}[v\cdot q](t,x)&\coloneqq\tfrac{1}{\eta}\int_{x}^{\infty}\!\!\!\!\exp\big(\tfrac{x-y}{\eta}\big)v(y)q(t,y)\dd y,&& (t,x)\in\OT
\end{aligned}
\label{eq:nonlocal_term}
\end{equation}
iff the conditions in \cite[Def. 2]{keimer2021discontinuous} are met when using as the nonlocal operator
\[
\cW_{\eta}[v\cdot q]\ \text{ instead of }\ \cW_{\eta}[q].
\]
\end{definition}
Given the previous definition, we are in the position to state the existence and uniqueness result, supplemented by a maximum principle:
\begin{theorem}[Existence and uniqueness and a maximum principle]\label{theo:existence_uniqueness_maximum}
Given \cref{ass:input_datum}, the discontinuous nonlocal conservation law in \cref{defi:discontinuous_nonlocal_conservation_law} has on every finite time horizon \(T\in\R_{>0}\) a unique weak solution in the sense of \cref{defi:weak_solution_2}
\[
q\in \sC\big([0,T];\sL^{1}_{\loc}(\R)\big)\cap\sL^{\infty}\big((0,T);\sL^{\infty}(\R)\big)
\]
which satisfies the maximum principle in the variable ``\(v\cdot q\)''
\begin{align}
    \essinf_{y\in\R} v(y)\cdot q_{0}(y)\leq v(x)q(t,x)\leq \|v\cdot q_{0}\|_{\sL^{\infty}(\R)},\quad (t,x)\in\OT\text{ a.e.}.\label{eq:maximum_principle}
\end{align}
In particular, the solution \(q\) satisfies
\begin{align}
\tfrac{\essinf_{y\in\R} v(y)q_{0}(y)}{\|v\|_{\sL^{\infty}(\R)}}    \leq q(t,x)\leq \tfrac{\|v\cdot q_{0}\|_{\sL^{\infty}(\R)}}{\uv},\quad (t,x)\in\OT\text{ a.e.}.
\end{align}
\end{theorem}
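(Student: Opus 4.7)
The plan is to follow essentially line-by-line the fixed-point construction used for \cite[Thm.~3.1]{keimer2021discontinuous}, with the single structural modification that the nonlocal operator now acts on \(v\cdot q\) instead of \(q\). Freezing a candidate \(w\) for the nonlocal term, the Cauchy problem
\begin{align*}
q_{t} + \partial_{x}\bigl(V(w(t,x))\, v(x)\, q\bigr) = 0,\qquad q(0,\cdot)=q_{0},
\end{align*}
is a linear continuity equation with a discontinuous but \(\sTV\)-regular coefficient \(v(x)V(w(t,x))\) bounded away from zero, and is solved explicitly by propagating \(q_{0}\) along its (weak) characteristics. Defining \(\Phi(w):=\cW_{\eta}[v\cdot q_{w}]\), the extra multiplicative factor \(v\) inside \(\cW_{\eta}\) only perturbs the contraction estimates of \cite{keimer2021discontinuous} by constants depending on \(\|v\|_{\sL^{\infty}(\R)}\) and \(\|v\|_{\sTV(\R)}\), so \(\Phi\) remains a contraction on short time intervals in an \(\sL^{1}\)-based space; iterating on successive windows yields existence and uniqueness on any \([0,T]\).

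For the maximum principle I would pass to the variable \(\rho:=v\cdot q\), which by \cref{rem:scaling_nonlocal} satisfies \(\rho_{t}+v(x)\partial_{x}\bigl(V(\cW_{\eta}[\rho])\rho\bigr)=0\). Along the characteristics \(\dot\xi(t)=v(\xi(t))V(\cW_{\eta}[\rho](t,\xi(t)))\) one computes
\begin{align*}
\tfrac{d}{dt}\rho(t,\xi(t)) = -v(\xi(t))\,V'(\cW_{\eta}[\rho])\,\rho\,\partial_{x}\cW_{\eta}[\rho],
\end{align*}
and the decisive identity available only for the exponential kernel,
\begin{align*}
\partial_{x}\cW_{\eta}[\rho] = \tfrac{1}{\eta}\bigl(\cW_{\eta}[\rho]-\rho\bigr),
\end{align*}
turns the right-hand side into a pointwise algebraic comparison between \(\rho\) and its nonlocal average. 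Since \(V'\leq 0\), \(v\geq\uv>0\) and \(\rho\geq 0\), whenever \(\rho(t,\xi(t))\) reaches \(\|v\cdot q_{0}\|_{\sL^{\infty}(\R)}\) one has \(\cW_{\eta}[\rho]\leq\rho\) and hence \(\tfrac{d}{dt}\rho\leq 0\); the symmetric argument at the essential infimum shows the lower bound is non-decreasing. This gives \cref{eq:maximum_principle}, and dividing by \(v\) together with \(\uv\leq v\leq\|v\|_{\sL^{\infty}(\R)}\) produces the stated two-sided bound on \(q\) itself.

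The main obstacle is making the above characteristic computation rigorous: since \(v\) is only \(\sTV\) and \(\rho\) only \(\sL^{\infty}\), neither \(\rho(t,\xi(t))\) nor its time derivative are classically defined, and the naive ``value at an extremal point'' argument is not directly available. My approach would be to first regularize \(v\) and \(q_{0}\) by mollification (noting that \(v\geq\uv\) is preserved by convolution with nonnegative kernels of unit mass), run the fixed-point scheme in the smooth setting where the maximum-principle computation becomes classical, and then pass to the limit using the stability/approximation statement that the authors announce together with \cref{theo:existence_uniqueness_maximum}. Uniqueness itself is obtained by the standard Gronwall estimate on \(\|q_{1}-q_{2}\|_{\sL^{1}(\R)}\) as in \cite{keimer2021discontinuous}, the only new ingredient being that the Lipschitz stability of \(\cW_{\eta}\) has to be read off in the argument \(v\cdot q\) rather than \(q\), which is harmless because \(v\) is the same fixed function for both solutions.
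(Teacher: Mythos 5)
Your proposal is correct and follows essentially the route the paper takes: the paper's proof of \cref{theo:existence_uniqueness_maximum} is itself only a pointer to a refinement of \cite[Thm.~3.1]{keimer2021discontinuous} combined with the smoothing/stability argument of \cref{prop:weak_stability}, which is exactly your fixed-point-plus-mollification strategy, and your characteristic computation for \(\rho=v\cdot q\) using \(\partial_{x}\cW_{\eta}[\rho]=\tfrac{1}{\eta}(\cW_{\eta}[\rho]-\rho)\) and \(V'\leq 0\) is the intended mechanism behind \cref{eq:maximum_principle}. You in fact supply more detail than the paper does, and you correctly flag the only delicate point (rigor of the extremal-value argument for merely \(\sTV\) data), resolving it the same way the authors do.
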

\begin{proof}
The proof is a refinement of \cite[Thm, 3.1]{keimer2021discontinuous} and a weak stability result which we will detail in \cref{prop:weak_stability} and which enables it to consider sequences of solutions which are smooth and then passing to the limit. We do not go into details.
\end{proof}
As we require to smooth our solution to obtain \(\sTV\) bounds uniform in \(\eta\) we present the following stability result in \(\sC(\sL^{1})\):
\begin{proposition}[A stability result]\label{prop:weak_stability}
Let \(q_{0}\in \sL^{\infty}(\R)\cap \sL^{1}(\R;\R_{\geq0})\) be given and \[\big(q_{0}^{\eps}\big)_{\eps\in\R_{>0}}\subset \sC^{\infty}_{\text{c}}(\R):\ \lim_{\eps\rightarrow 0} \|q_{0}^{\eps}-q_{0}\|_{\sL^{1}(\R)}=0\] its mollified version with \(\|q_{0}^{\eps}\|_{\sL^{\infty}(\R)}\leq \|q_{0}\|_{\sL^{\infty}(\R)}\ \forall \eps\in\R_{>0}\).

Let the discontinuity \(v\in\sL^{\infty}(\R;\R_{\geq\uv})\cap \sTV(\R)\) be given and a smooth version
\[
v^{\eps}\in\sC\big(\R;\R_{\geq\uv}\big): \lim_{\eps\rightarrow 0}\|v-v^{\eps}\|_{\sL^{1}(\R)}=0,
\]
again with \(\|v_{\eps}\|_{\sL^{\infty}(\R)}\leq \|v\|_{\sL^{\infty}(\R)}\ \forall \eps\in\R_{>0}\).
Then, the solution to the nonlocal conservation law \(q^{\eps}\in \sW^{1,\infty}(\OT)\) with initial datum \(q_{0}^{\eps}\) and discontinuity \(v^{\eps}\) satisfies
\[
\lim_{\eps\rightarrow 0}\|q^{\eps}-q\|_{\sC([0,T];\sL^{1}(\R)}=0,
\]
if \(q\in \sC([0,T];\sL^{1}_{\loc}(\R))\cap \sL^{\infty}((0,T);\sL^{\infty}(\R))\) denotes the solution to the nonlocal conservation law with initial datum \(q_{0}\) and discontinuity \(v\).

In particular, the solution \(q^{\eps}\) is a strong solution.
\end{proposition}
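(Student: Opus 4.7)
My plan is to establish the convergence $q^\eps\to q$ as a Cauchy property in $\sC([0,T];\sL^1(\R))$ closed by Gronwall's inequality, and then identify the limit with $q$ through the uniqueness assertion of \cref{theo:existence_uniqueness_maximum}.

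First, I collect uniform a priori bounds. The maximum principle from \cref{theo:existence_uniqueness_maximum} applied to each smooth-data problem gives $\|v^\eps q^\eps\|_{\sL^\infty(\OT)}\leq\|vq_0\|_{\sL^\infty(\R)}$, hence $\|q^\eps\|_{\sL^\infty(\OT)}\leq\|vq_0\|_{\sL^\infty(\R)}/\uv$ uniformly in $\eps$, as well as $q^\eps\geq 0$; mass conservation then yields $\|q^\eps(t,\cdot)\|_{\sL^1(\R)}=\|q_0^\eps\|_{\sL^1(\R)}$ uniformly bounded in $(t,\eps)$. In particular $\cW_\eta[v^\eps q^\eps]$ takes values in a fixed compact interval on which $V,V'$ are uniformly Lipschitz (exploiting $V\in\sW^{2,\infty}_{\loc}(\R)$). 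Two features of the exponential kernel will be used throughout: the contractivity $\|\cW_\eta[w](t,\cdot)\|_{\sL^p(\R)}\leq\|w(t,\cdot)\|_{\sL^p(\R)}$ for $p\in\{1,\infty\}$ and the pointwise identity $\partial_x\cW_\eta[w]=\tfrac{1}{\eta}(\cW_\eta[w]-w)$.

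Setting $w:=q^{\eps_1}-q^{\eps_2}$ for $\eps_1,\eps_2>0$ and subtracting the two conservation laws, I decompose the flux difference telescopically into a piece linear in $w$, a piece linear in $v^{\eps_1}-v^{\eps_2}$, and a piece in which Lipschitz continuity of $V$ reduces $V(\cW_\eta[v^{\eps_1}q^{\eps_1}])-V(\cW_\eta[v^{\eps_2}q^{\eps_2}])$ to $\cW_\eta$ applied to $v^{\eps_1}q^{\eps_1}-v^{\eps_2}q^{\eps_2}$, itself bounded in $\sL^1$ by $\|v\|_{\sL^\infty}\|w\|_{\sL^1}+\|q^{\eps_2}\|_{\sL^\infty}\|v^{\eps_1}-v^{\eps_2}\|_{\sL^1}$ thanks to the $\sL^1$-contractivity. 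Testing the difference equation against a Lipschitz regularization of $\sgn(w)$ (licit since both $q^{\eps_i}\in\sW^{1,\infty}(\OT)$), eliminating any $\partial_x\cW_\eta$ term by the kernel identity above so as never to differentiate the non-uniformly-smooth factor $v^\eps$, and passing to the limit in the regularization give the integral inequality
\[
\|w(t,\cdot)\|_{\sL^1(\R)}\leq\|q_0^{\eps_1}-q_0^{\eps_2}\|_{\sL^1(\R)}+C\|v^{\eps_1}-v^{\eps_2}\|_{\sL^1(\R)}+C\int_0^t\|w(s,\cdot)\|_{\sL^1(\R)}\,\dd s,
\]
where $C$ depends on $\eta,T$ and the uniform bounds. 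Gronwall then makes $(q^\eps)_{\eps>0}$ Cauchy in $\sC([0,T];\sL^1(\R))$; passing to the limit in the weak formulation of \cref{defi:weak_solution_2} (using $\sL^1$-convergence of $v^\eps,q_0^\eps$ and the uniform $\sL^\infty$-bound on $q^\eps$) identifies the limit as a weak solution of the discontinuous nonlocal conservation law with data $(q_0,v)$, and uniqueness from \cref{theo:existence_uniqueness_maximum} pins it down to be $q$.

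The main obstacle is that a naive integration by parts in the $\sL^1$-testing step produces $\partial_xv^\eps$-type factors whose $\sL^\infty$-norm is not uniform in $\eps$; the identity $\partial_x\cW_\eta=\eta^{-1}(\cW_\eta-\mathrm{id})$ is what allows one to replace every spatial derivative falling on the nonlocal term by bounded quantities, thereby closing the estimate with $\eps$-independent constants. The assertion that each $q^\eps$ is a strong solution is then a standard byproduct of the fixed-point construction underlying \cite[Thm.~3.1]{keimer2021discontinuous} in the smooth regime $v^\eps\in\sC(\R)$, $q_0^\eps\in\sC^\infty_{\text{c}}(\R)$.
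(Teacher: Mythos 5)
The paper does not actually carry out this proof: it is delegated wholesale to \cite[Thm.~3.2 \& Lem.~3.1]{keimer2021discontinuous}, whose stability machinery runs through the characteristics of the (discontinuous) ODE $\dot x=v(x)V\big(\cW_{\eta}[v\cdot q](t,x)\big)$ and a fixed-point argument for the nonlocal term, so that no spatial derivative of $q$ is ever taken. Your route — a direct Gronwall/Cauchy estimate in $\sC([0,T];\sL^{1}(\R))$ obtained by testing the difference equation against a regularized $\sgn(w)$ — is genuinely different, and it has a gap at the decisive step.

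The gap is this: after your telescoping decomposition, the flux difference contains the pieces $v^{\eps_{1}}\big(V(\cW_{\eta}[v^{\eps_{1}}q^{\eps_{1}}])-V(\cW_{\eta}[v^{\eps_{2}}q^{\eps_{2}}])\big)q^{\eps_{2}}$ and $(v^{\eps_{1}}-v^{\eps_{2}})V(\cdot)\,q^{\eps_{2}}$, which sit under a $\partial_{x}$ and are \emph{not} proportional to $w=q^{\eps_{1}}-q^{\eps_{2}}$, so the usual cancellation $\int \sgn(w)\,\partial_{x}(cw)\dd x\to 0$ does not apply to them. You must therefore either expand the derivative, which produces $\partial_{x}q^{\eps_{2}}$, or integrate by parts onto the regularized sign, which produces $\sgn_{\delta}'(w)\,\partial_{x}w$ and does not pass to the limit $\delta\to0$. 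The kernel identity $\partial_{x}\cW_{\eta}=\tfrac{1}{\eta}(\cW_{\eta}-v q)$ disposes of derivatives of the nonlocal term, and mollification keeps $\|\partial_{x}v^{\eps}\|_{\sL^{1}(\R)}\leq|v|_{\sTV(\R)}$, but nothing controls $\|\partial_{x}q^{\eps_{2}}(t,\cdot)\|_{\sL^{1}(\R)}=|q^{\eps_{2}}(t,\cdot)|_{\sTV(\R)}$ uniformly in $\eps_{2}$ — the paper itself emphasizes (conclusions, citing \cite{COLOMBO20211653}) that uniform $\sTV$ bounds on the solution are unavailable, which is precisely why all compactness in this work is run on $\cW_{\eta}$ rather than on $q$. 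Hence your Gronwall constant $C$ degenerates as $\eps_{2}\to0$ and the Cauchy property does not follow. A repair would either follow the cited characteristics/fixed-point route, or first prove stability for the $\sTV$-controlled quantity $\cW_{\eta}[v^{\eps}\cdot q^{\eps}]$ and recover $q^{\eps}=(\cW_{\eta}-\eta\,\partial_{x}\cW_{\eta})/v^{\eps}$ afterwards. The remaining ingredients of your sketch (uniform $\sL^{\infty}$ bounds from the maximum principle, $\sL^{1}$-contractivity of the exponential kernel, identification of the limit via uniqueness, strong solvability for smooth data) are sound.
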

\begin{proof}
The proof is a refinement of \cite[Thm. 3.2 \& Lem. 3.1]{keimer2021discontinuous}. 
That the proposed mollifiers exist is a direct consequence of \cite[Rem. C.18, ii]{leoni}.
\end{proof}
As we later require a stability result for the corresponding nonlocal operator \(\cW\) as well, we detail it here and mentioning that the convergence is then uniformly. This is due to the fact that the integral operator converts the \(\sL^{1}\) convergence to a uniform convergence as we look at a type of anti-derivative of the solution:
\begin{corollary}[Convergence of the nonlocal operator \(\cW_{\eta}\)]
Let the assumptions in \cref{prop:weak_stability} hold, the convergence of the solution carries over to the nonlocal term, i.e., it holds that
\[
\lim_{\eps\rightarrow 0}\big\|\cW_{\eta}[v^{\eps}\cdot q^{\eps}]-\cW_{\eta}[v\cdot q]\big\|_{\sL^{\infty}((0,T);\sL^{\infty}(\R))}=0.
\]
\end{corollary}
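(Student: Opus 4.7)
The plan is to reduce the required uniform convergence of the nonlocal operator to the $C([0,T];L^1(\mathbb{R}))$ convergence of the densities provided by \cref{prop:weak_stability}, exploiting that the exponential kernel is bounded and that everything lives inside a fixed $L^\infty$-ball by the maximum principle.

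First, I would write out the pointwise difference, using the fact that $\exp(\tfrac{x-y}{\eta})\leq 1$ for $y\geq x$:
\begin{equation*}
\bigl|\cW_\eta[v^\eps q^\eps](t,x)-\cW_\eta[vq](t,x)\bigr|
\;\leq\;\tfrac{1}{\eta}\int_x^\infty \exp\!\bigl(\tfrac{x-y}{\eta}\bigr)\bigl|v^\eps(y)q^\eps(t,y)-v(y)q(t,y)\bigr|\dd y
\;\leq\;\tfrac{1}{\eta}\bigl\|v^\eps q^\eps(t,\cdot)-vq(t,\cdot)\bigr\|_{\sL^1(\R)}.
\end{equation*}
Crucially the right-hand side is independent of $x$, so taking the essential supremum in $x$ costs nothing.

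Next I would decompose $v^\eps q^\eps - vq = v^\eps\bigl(q^\eps - q\bigr) + (v^\eps - v)q$ and estimate the two pieces separately. For the first piece, using $\|v^\eps\|_{\sL^\infty(\R)}\leq \|v\|_{\sL^\infty(\R)}$ uniformly in $\eps$ together with \cref{prop:weak_stability},
\begin{equation*}
\sup_{t\in[0,T]}\|v^\eps(q^\eps(t,\cdot)-q(t,\cdot))\|_{\sL^1(\R)}\;\leq\;\|v\|_{\sL^\infty(\R)}\,\|q^\eps-q\|_{\sC([0,T];\sL^1(\R))}\xrightarrow{\eps\to 0}0.
\end{equation*}
For the second piece, the maximum principle \cref{eq:maximum_principle} provided by \cref{theo:existence_uniqueness_maximum} yields a uniform $L^\infty$-bound $C\coloneqq \|vq_0\|_{\sL^\infty(\R)}/\uv$ for $q(t,\cdot)$ on $[0,T]$, hence
\begin{equation*}
\sup_{t\in[0,T]}\|(v^\eps - v)q(t,\cdot)\|_{\sL^1(\R)}\;\leq\;C\,\|v^\eps-v\|_{\sL^1(\R)}\xrightarrow{\eps\to 0}0
\end{equation*}
by assumption.

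Combining the two estimates gives
\begin{equation*}
\bigl\|\cW_\eta[v^\eps q^\eps]-\cW_\eta[vq]\bigr\|_{\sL^\infty((0,T);\sL^\infty(\R))}\;\leq\;\tfrac{1}{\eta}\Bigl(\|v\|_{\sL^\infty(\R)}\|q^\eps-q\|_{\sC([0,T];\sL^1(\R))}+C\|v^\eps-v\|_{\sL^1(\R)}\Bigr)\xrightarrow{\eps\to 0}0,
\end{equation*}
which is the claim. There is really no hard step here; the only subtlety is making sure one has a uniform-in-$t$ $L^\infty$-bound on $q(t,\cdot)$ to handle the term where $v^\eps-v$ only converges in $L^1$, and this is exactly what the maximum principle in \cref{theo:existence_uniqueness_maximum} delivers. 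The parenthetical remark in the statement that the convergence is uniform is automatic from the estimate above, since the exponential kernel converts $L^1$-control of the integrand into pointwise-in-$x$ control.
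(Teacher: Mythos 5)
Your argument is correct and is essentially the paper's own proof: both add and subtract a cross term in $v^{\eps}q^{\eps}-vq$, bound the exponential kernel by $1$ to convert $\sL^{1}$-control of the integrand into a bound uniform in $x$, and invoke \cref{prop:weak_stability} together with a uniform $\sL^{\infty}$ bound on the density (the paper bounds $q^{\eps}$ by $\|q_{0}\|_{\sL^{\infty}(\R)}$, you bound $q$ via the maximum principle -- an immaterial difference in which factor carries the cross term). No gaps.
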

\begin{proof}
This follows directly from the definition of the nonlocal operator. To this end, let \((t,x)\in\OT\) be given, and compute for \(\eta\in\R_{>0}\)
\begin{align*}
&\Big|\cW_{\eta}[v^{\eps}\cdot q^{\eps}](t,x)-\cW_{\eta}[v\cdot q](t,x)\Big|\\
&\leq \Big|\cW_{\eta}[v^{\eps}\cdot q^{\eps}-v\cdot q](t,x)\Big|\\
&\leq \Big|\cW_{\eta}[v^{\eps}\cdot q^{\eps}-v\cdot q^{\eps}](t,x)\Big|+\Big|\cW_{\eta}[v\cdot q^{\eps}-v\cdot q](t,x)\Big|\\
&\leq \tfrac{1}{\eta}\|q_{0}\|_{\sL^{\infty}(\R)}\|v^{\eps}-v\|_{\sL^{1}(\R)}+ \tfrac{1}{\eta}\|v\|_{\sL^{\infty}(\R)}\big\|q^{\eps}-q\big\|_{\sC([0,T];\sL^{1}(\R))}
\end{align*}
where the previous estimates follow from the specific choices of the mollifiers, the triangular inequality and $1$ as  the uniform bound of the exponential function on \(\R_{>0}\).
From the last term, the conclusion follows by the strong convergence of \(v^{\eps},q^{\eps}_{\eta}\) as guaranteed by \cref{prop:weak_stability}.
\end{proof}
\section{\texorpdfstring{\(\sTV\)}{TV} bound for the nonlocal term}
In this section, we prove uniform \(\sTV\) bounds for the nonlocal term \(\cW_{\eta}\). Using the classical compactness results we then know that the the nonlocal term converges on a subsequence to a limit strongly in \(\sL^{1}_{\loc}\) and will enable it to also obtain the convergence of the solution \(q\).
\subsection{The dynamics in the nonlocal term}
To this end, we identify dynamics with regard to the nonlocal term \(\cW_{\eta}\) so that we can work fully on the nonlocal term and not on the equation in \(q\). The nonlocal term is smoother due to the involved integration and this is -- compare also \cite{coclite2022general} -- one of the reason why it makes sense to consider it instead of \(q\). As we require a derivative of \(v\) in the formula, we need to smooth the solution and consider smooth \(v\). However, as we have also \cref{prop:weak_stability}, we can directly assume that all involved functions are smooth and later pass to the limit.

\begin{lemma}[Transport equation for $\cW_{\eta}$ with nonlocal right hand side]\label{lem:nonlocal_equation_W}
Given the dynamics in \cref{eq:nonlocal_conservation_law} be given so that 
\[
q_{0}\in \sC^{\infty}_{\text{c}}(\R),\quad  v\in \sC^{\infty}(\R)\cap \sL^{\infty}(\R;\R_{\geq \uv})\cap \sTV(\R).
\]
Then, according to \cref{theo:existence_uniqueness_maximum} the nonlocal conservation law has a unique solution \(q_{\eta}\) and the nonlocal term \(\cW_{\eta}[v\cdot q]\eqqcolon\cW_{\eta}\) satisfies the following Cauchy problem for \(\eta\in\R_{>0}\)
    \begin{equation}
    \begin{aligned}
    \cW_{t}+v(x)V(\cW)\cW_{x}
    &=-\tfrac{1}{\eta}\!\!\int_{x}^{\infty}\!\!\!\!\e^{\frac{x-y}{\eta}}v(y)\cW(t,y)V'(\cW(t,y))\cW_{y}(t,y)\dd y\\
    &\qquad -\int_{x}^{\infty}\e^{\frac{x-y}{\eta}}v'(y)V(\cW(t,y))\cW_{y}(t,y)\dd y,&& (t,x)\in\OT,\\
    \cW(0,x)&=\tfrac{1}{\eta}\int_{x}^{\infty}\e^{\frac{x-y}{\eta}}v(y)q_{0}(y)\dd y, && x\in\R.
    \end{aligned}
    \label{eq:transport_equation_nonlocal}
    \end{equation}
\end{lemma}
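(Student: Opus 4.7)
The plan is to differentiate the defining integral $\cW(t,x)=\tfrac{1}{\eta}\int_x^\infty e^{(x-y)/\eta}v(y)q(t,y)\dd y$ directly in $t$ and $x$, insert the primal equation $q_t=-\partial_x\bigl(V(\cW)vq\bigr)$, and then integrate by parts twice, using throughout the algebraic identity that expresses $vq$ in terms of $\cW$ and $\cW_x$. Because $v\in \sC^\infty$ and $q_0\in\sC^\infty_{\mathrm c}$, \cref{theo:existence_uniqueness_maximum} together with \cref{prop:weak_stability} ensures that $q_\eta$ is classical and decays sufficiently fast in space, so every boundary term at $y=\infty$ vanishes and every differentiation under the integral sign is legitimate. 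The initial condition in \cref{eq:transport_equation_nonlocal} is immediate by plugging $t=0$ into the definition of $\cW_\eta$.

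\textbf{The key spatial identity.} Differentiating the definition of $\cW$ in $x$ by Leibniz's rule gives
\[
\cW_x(t,x)=\tfrac{1}{\eta}\bigl(\cW(t,x)-v(x)q(t,x)\bigr),\qquad\text{i.e.}\qquad v(x)q(t,x)=\cW(t,x)-\eta\,\cW_x(t,x).
\]
This identity, applied pointwise in $y$, will be the bridge that converts all occurrences of $vq$ in the integrand into $\cW$ and $\cW_y$.

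\textbf{Time derivative and first integration by parts.} Differentiating under the integral and substituting the primal equation yields
\[
\cW_t(t,x)=-\tfrac{1}{\eta}\int_x^\infty e^{(x-y)/\eta}v(y)\,\partial_y\bigl(V(\cW(t,y))v(y)q(t,y)\bigr)\dd y.
\]
An integration by parts in $y$, using $\partial_y\bigl[e^{(x-y)/\eta}v(y)\bigr]=-\tfrac{1}{\eta}e^{(x-y)/\eta}v(y)+e^{(x-y)/\eta}v'(y)$ and the boundary contribution $v(x)^2q(t,x)V(\cW(t,x))$ at $y=x$, produces
\[
\cW_t=\tfrac{v(x)^2q(t,x)V(\cW(t,x))}{\eta}-\tfrac{1}{\eta^2}\!\int_x^\infty\!\! e^{(x-y)/\eta}v(y)V(\cW)vq\dd y+\tfrac{1}{\eta}\!\int_x^\infty\!\! e^{(x-y)/\eta}v'(y)V(\cW)vq\dd y.
\]
Now combine with $v(x)V(\cW)\cW_x=\tfrac{1}{\eta}v(x)V(\cW)\cW(t,x)-\tfrac{1}{\eta}v(x)^2q(t,x)V(\cW)$; the pointwise $v^2qV/\eta$ terms cancel.

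\textbf{Second integration by parts and conclusion.} In the two remaining integrals, substitute $v(y)q(t,y)=\cW(t,y)-\eta\,\cW_y(t,y)$. The term with $v'$ splits into exactly the second integral on the right-hand side of \cref{eq:transport_equation_nonlocal} plus a spurious piece $\tfrac{1}{\eta}\!\int e^{(x-y)/\eta}v'(y)V(\cW)\cW\dd y$. The term carrying the factor $\tfrac{1}{\eta^2}$ is rewritten by noting $-\tfrac{1}{\eta^2}e^{(x-y)/\eta}=\tfrac{1}{\eta}\partial_y e^{(x-y)/\eta}$ and integrated by parts once more: this generates a boundary term $-\tfrac{1}{\eta}v(x)V(\cW(t,x))\cW(t,x)$ (which kills the remaining pointwise term from combining with $vV(\cW)\cW_x$), together with integrals containing $\partial_y[v(y)V(\cW(t,y))\cW(t,y)]=v'V(\cW)\cW+vV'(\cW)\cW_y\cW+vV(\cW)\cW_y$. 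A direct bookkeeping shows that the $v'V(\cW)\cW$ contributions cancel the spurious piece mentioned above, the $vV(\cW)\cW_y$ contributions cancel each other, and what survives is precisely $-\tfrac{1}{\eta}\int e^{(x-y)/\eta}v(y)\cW(t,y)V'(\cW(t,y))\cW_y(t,y)\dd y$, matching \cref{eq:transport_equation_nonlocal}.

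\textbf{Main obstacle.} The computation itself is a sequence of Leibniz rule plus two integrations by parts, so there is no deep analytic difficulty once classical regularity of $q$ is available; the real work is the careful accounting of boundary terms at $y=x$ and of the many competing $\tfrac{1}{\eta}$ and $\tfrac{1}{\eta^2}$ contributions, which must pair up so that every term without a $V'(\cW)\cW_y$ or a $v'V(\cW)\cW_y$ factor cancels.
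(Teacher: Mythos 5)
Your proposal is correct and follows essentially the same route as the paper's proof: differentiate under the integral, insert the strong form of the equation, integrate by parts, use the identity $v q=\cW_{\eta}-\eta\,\partial_{x}\cW_{\eta}$ to eliminate $q$, and perform a second integration by parts so that all terms without a $V'(\cW)\cW_{y}$ or $v'V(\cW)\cW_{y}$ factor cancel. The only (immaterial) difference is which factor you move in the second integration by parts — you integrate the $\tfrac{1}{\eta^{2}}$-term, the paper the $\tfrac{1}{\eta}\int v\,\e^{(x-y)/\eta}V(\cW)\cW_{y}$-term — and both yield the same cancellation.
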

\begin{proof}
Recalling the nonlocal term we have for \((t,x)\in\OT\)
\begin{align*}
 \cW_{\eta}(t,x)\coloneqq   \cW_{\eta}[v\cdot q](t,x)&=\tfrac{1}{\eta}\int_{x}^{\infty}\e^{\frac{x-y}{\eta}}v(y)q(t,y)\dd y.
\end{align*}
The derivative of \(\cW_{\eta}\) with regard to space can be computed as follows for \((t,x)\in\OT\)
\begin{align}
    \partial_{x}\cW_{\eta}(t,x)=\tfrac{1}{\eta}\big(\cW_{\eta}(t,x)-v(x)q(t,x)\big)\Rightarrow v(x)q(t,x)=\cW_{\eta}(t,x)-\eta\partial_{x}\cW_{\eta}(t,x).\label{eq:identity_W}
\end{align}
For the time derivative of \(\cW_{\eta}\) we have
\begin{align}
    \partial_{t}\cW_{\eta}(t,x)&=\partial_{t}\bigg(\tfrac{1}{\eta}\int_{x}^{\infty}v(y)\exp(\tfrac{x-y}{\eta})q(t,y)\dd y\bigg)=\tfrac{1}{\eta}\int_{x}^{\infty}v(y)\exp(\tfrac{x-y}{\eta})\partial_{t}q(t,y)\dd y\notag\\
\intertext{and as \(q\) is a strong solution of the discontinuous nonlocal conservation law \cref{defi:discontinuous_nonlocal_conservation_law}}
      &=-\tfrac{1}{\eta}\int_{x}^{\infty}v(y)\exp(\tfrac{x-y}{\eta})\partial_{y}\big(v(y)V(\cW_{\eta}(t,y))q(t,y)\big)\dd y\label{eq:42}
      \intertext{integration by parts}
      &=\tfrac{1}{\eta}v(x)^{2}V(\cW_{\eta}(t,x))q(t,x)+\tfrac{1}{\eta}\int_{x}^{\infty}v'(y)\exp(\tfrac{x-y}{\eta})v(y)V(\cW_{\eta}(t,y))q(t,y)\dd y\notag\\
      &\quad -\tfrac{1}{\eta^{2}}\int_{x}^{\infty}v(y)^{2}\e^{\frac{x-y}{\eta}}V(\cW_{\eta}(t,y))q(t,y)\dd y,\notag
      \intertext{taking advantage of identity \cref{eq:identity_W} to replace \(q\)}
      &=\tfrac{1}{\eta}v(x)V(\cW_{\eta}(t,x))\cW_{\eta}(t,x)-v(x)V(\cW_{\eta}(t,x))\partial_{x}\cW_{\eta}(t,x)\notag\\
      &\quad +\tfrac{1}{\eta}\int_{x}^{\infty}\!\!\!\!\!v'(y)\exp(\tfrac{x-y}{\eta})V(\cW_{\eta}(t,y))\cW_{\eta}(t,y)\dd y\notag\\
      &\quad-\int_{x}^{\infty}\!\!\!\!\!v'(y)\exp(\tfrac{x-y}{\eta})V(\cW_{\eta}(t,y))\partial_{y}\cW_{\eta}(t,y)\dd y\notag\\
      &\quad -\tfrac{1}{\eta^{2}}\int_{x}^{\infty}\!\!\!\!v(y)\e^{\frac{x-y}{\eta}}V(\cW_{\eta}(t,y))\cW_{\eta}(t,y)\dd y\notag\\
      &\quad+\tfrac{1}{\eta}\int_{x}^{\infty}\!\!\!\!v(y)\e^{\frac{x-y}{\eta}}V(\cW_{\eta}(t,y))\partial_{y}\cW_{\eta}(t,y)\dd y\notag
      \intertext{and an integration by parts in the latter term}
      &=-v(x)V(\cW_{\eta}(t,x))\partial_{x}\cW_{\eta}(t,x)-\int_{x}^{\infty}\!\!\!\!\!v'(y)\exp(\tfrac{x-y}{\eta})V(\cW_{\eta}(t,y))\partial_{y}\cW_{\eta}(t,y)\dd y\notag\\
      &\quad -\tfrac{1}{\eta}\int_{x}^{\infty}\!\!\!\!v(y)\e^{\frac{x-y}{\eta}}V'(\cW_{\eta}(t,y))\cW_{\eta}(t,y)\partial_{y}\cW_{\eta}(t,y)\dd y\notag
.\end{align}
However, this is indeed the equality which we wanted to establish. The corresponding initial datum is a direct consequence of the definition of the nonlocal \(\cW_{\eta}\).
\end{proof}

\subsection{A uniform \(\sTV\) bound}
Having the identity for the nonlocal term in \cref{lem:nonlocal_equation_W}, we can derive total variation estimates uniform in the nonlocal parameter \(\eta\in\R_{>0}\). This will be carried out in the following \cref{prop:TV_bounds}. However, before doing that we require a Lemma stating that the spatial derivative of the nonlocal operator vanishes at \(-\infty\).
\begin{lemma}[Vanishing \(\partial_{2}\cW_{\eta}\) at negative infinity]\label{lem:W_x_vanishing}
Let \cref{ass:input_datum} hold. Then, we have for \(q\in L^{\infty}(\R)\cap \sTV(\R)\cap \sC^{1}(\R)\) and \(v\in \sL^{\infty}(\R)\cap\sTV(\R)\cap\sC^{1}(\R)\) with \(\cW_{\eta}\) the nonlocal operator as in \cref{eq:nonlocal_term}
\[
\lim_{x\rightarrow-\infty} \partial_{x}\cW_{\eta}[v\cdot q](x)=0.
\]
\end{lemma}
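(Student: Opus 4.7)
The plan is to reduce the claim to a statement about the nonlocal operator itself and then exploit the exponential kernel together with the BV structure. From the direct computation already carried out in \cref{eq:identity_W} (which is purely a spatial identity and does not use the equation in $t$), one has
\[
\partial_{x}\cW_{\eta}[v\cdot q](x)=\tfrac{1}{\eta}\bigl(\cW_{\eta}[v\cdot q](x)-v(x)q(x)\bigr),
\]
so it suffices to show $\cW_{\eta}[v\cdot q](x)-v(x)q(x)\to 0$ as $x\to -\infty$.

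Next I would observe that $v\cdot q$ is continuous, bounded, and of bounded variation on $\R$ (the product of two continuous bounded BV functions is again continuous bounded BV, by the elementary estimate $\sTV(vq)\leq \|v\|_{\sL^{\infty}(\R)}\sTV(q)+\|q\|_{\sL^{\infty}(\R)}\sTV(v)$). A continuous bounded BV function on $\R$ admits finite pointwise limits at $\pm\infty$; call $L\coloneqq \lim_{x\to -\infty} v(x)q(x)$.

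It then remains to prove $\cW_{\eta}[v\cdot q](x)\to L$ as $x\to -\infty$. After the substitution $u=(y-x)/\eta$ the nonlocal operator reads
\[
\cW_{\eta}[v\cdot q](x)=\int_{0}^{\infty}\ee^{-u}\,v(x+\eta u)q(x+\eta u)\dd u.
\]
For every fixed $u\in\R_{>0}$ the integrand converges to $\ee^{-u}L$ as $x\to-\infty$, and it is dominated uniformly in $x$ by $\ee^{-u}\|v\|_{\sL^{\infty}(\R)}\|q\|_{\sL^{\infty}(\R)}\in\sL^{1}((0,\infty))$. Lebesgue's dominated convergence theorem then yields $\cW_{\eta}[v\cdot q](x)\to L\int_{0}^{\infty}\ee^{-u}\dd u=L$, and combining with $v(x)q(x)\to L$ the desired conclusion $\partial_{x}\cW_{\eta}[v\cdot q](x)\to 0$ follows.

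The only mildly delicate step is the existence of the limit $L$, which is the reason the BV hypotheses on $v$ and $q$ are needed; everything else is a one-line application of dominated convergence tailored to the exponential kernel.
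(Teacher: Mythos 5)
Your proof is correct, but it follows a genuinely different route from the paper's. The paper differentiates under the integral to write \(\partial_{x}\cW_{\eta}[v\cdot q]=\cW_{\eta}\big[v'\cdot q+v\cdot\partial_{x}q\big]\), splits the resulting integral at a point \(y^{*}\) chosen so far to the left that \(|v|_{\sTV((-\infty,y^{*}))}\) and \(|q|_{\sTV((-\infty,y^{*}))}\) are arbitrarily small, and then sends \(x\rightarrow-\infty\) in the remaining integrals over \((y^{*},\infty)\) by dominated convergence. You instead invoke the algebraic identity \(\partial_{x}\cW_{\eta}[v\cdot q]=\tfrac{1}{\eta}\big(\cW_{\eta}[v\cdot q]-v\cdot q\big)\) from \cref{eq:identity_W}, observe that the continuous bounded \(\sBV\) function \(v\cdot q\) has a limit \(L\) at \(-\infty\), and show via the substitution \(u=(y-x)/\eta\) and dominated convergence that \(\cW_{\eta}[v\cdot q](x)\rightarrow L\) as well, so the difference vanishes. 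Each step checks out: the product estimate \(\sTV(vq)\leq\|v\|_{\sL^{\infty}(\R)}\sTV(q)+\|q\|_{\sL^{\infty}(\R)}\sTV(v)\) is standard, the existence of \(L\) follows from the Jordan decomposition into bounded monotone functions, and the domination by \(\ee^{-u}\|v\|_{\sL^{\infty}(\R)}\|q\|_{\sL^{\infty}(\R)}\) is uniform in \(x\). Your argument is shorter and does not even use the \(\sC^{1}\) hypothesis beyond making the left-hand side classically meaningful; it also dovetails nicely with the repeated use of \cref{eq:identity_W} elsewhere in the paper. The trade-off is that your route is tied to the exponential kernel through that identity, whereas the paper's tail-splitting argument would survive a generalization to other monotonically decreasing integrable kernels in the spirit of \cref{rem:nonlocal_kernel}.
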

\begin{proof}
We have for all \(x\in\R\) and \(y^{*}\in\R\)
\begin{align*}
    \big|\partial_{x}\cW_{\eta}[v\cdot q](x)\big|&= \big|\cW_{\eta}\big[v'\cdot q+v\cdot \partial_{x} q\big](x)\big|\\
    &\leq \tfrac{1}{\eta}\int_{-\infty}^{y^{*}} |v'(y)q(y)|\dd y+\tfrac{1}{\eta}\int_{-\infty}^{y^{*}} |v(y)\partial_{y}q(y)|\dd y\\
    &\quad +\tfrac{1}{\eta}\int_{y^{*}}^{\infty} \e^{\frac{x-y}{\eta}}|v'(y)q(y)|\dd y+\tfrac{1}{\eta}\int_{y^{*}}^{\infty} \e^{\frac{x-y}{\eta}}|v(y)\partial_{y}q(y)|\dd y\\
    &\leq \tfrac{1}{\eta}\|q\|_{\sL^{\infty}(\R)}|v|_{\sTV((-\infty,y^{*}))}+\tfrac{1}{\eta} \|v\|_{\sL^{\infty}(\R)} |q|_{\sTV((-\infty, y^{*}))}\\
    &\quad +\tfrac{1}{\eta}\int_{y^{*}}^{\infty} \e^{\frac{x-y}{\eta}}|v'(y)q(y)|\dd y+\tfrac{1}{\eta}\int_{y^{*}}^{\infty} \e^{\frac{x-y}{\eta}}|v(y)\partial_{y}q(y)|\dd y.
\end{align*}
Now, we chose \(y^{*}\in\R_{<0}\) negative enough so that \(|q|_{\sTV(-\infty,y^{*})}\) and \(|v|_{\sTV((-\infty,y^{*}))}\) are arbitrary small and letting \(x\rightarrow-\infty\) in the second two terms using the dominated convergence theorem, we have that both terms vanish. Altogether, we obtain
\[
\lim_{x\rightarrow -\infty}|\partial_{x}\cW_{\eta}[v\cdot q](x)|=0.
\]
\end{proof}
The next proposition will provide a uniform \(\sTV\) bound (with regard to \(\eta\)) of the solution which is ``partially'' uniform in \(\eps\) as well as long as \({v^{\eps}}'(x)\) is bounded from above for all \(\eps\in\R_{>0}\) and \(x\in\R\).
\begin{proposition}[\(TV\) bound for \(v\) with OSL condition (one sided Lipschitz)]\label{prop:TV_bounds}
Let \cref{ass:input_datum} hold and assume that we have initial datum and discontinuity smoothed with \(\eps\in\R_{>0}\) as in \cref{prop:weak_stability}. Then, for every \(\eps\in\R_{>0}\) the nonlocal equation in \(\cW^{\eps}_{\eta}\) as in \cref{lem:nonlocal_equation_W} satisfies
\begin{align}
    \big|\cW_{\eta}^{\eps}(t,\cdot)\big|_{\sTV(\R)}\leq |v^{\eps}\cdot q_{0}^{\eps}|_{\sTV(\R)}\e^{2t\esssup_{x\in\R}{v^{\eps}}'(x)\|V\|_{\sL^{\infty}((0,\|v\cdot q_{0}\|_{\sL^{\infty}(\R)}))}},\ \forall t\in [0,T].\label{eq:TV_estimate}
\end{align}
\end{proposition}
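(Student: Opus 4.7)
The plan is to differentiate the transport equation from \cref{lem:nonlocal_equation_W} in the spatial variable, set $U\coloneqq\partial_x\cW_\eta^\eps$, derive an equation for $U$, multiply by $\sgn(U)$, and integrate in $x$ over $\R$ to obtain a Grönwall inequality for $\int_\R|U(t,x)|\dd x=|\cW_\eta^\eps(t,\cdot)|_{\sTV(\R)}$ (equality since the smoothed datum give a smooth $\cW_\eta^\eps$). Here I rely on \cref{lem:W_x_vanishing} so that the boundary contributions at $\pm\infty$ vanish (decay at $+\infty$ comes from the compact support of $q_0^\eps$ and the exponential factor).

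First I compute $\partial_x$ of the two nonlocal integrals on the right-hand side of \cref{eq:transport_equation_nonlocal}: each produces a local boundary term at $y=x$ together with a factor $1/\eta$ times the same nonlocal integral (since $\partial_x\int_x^\infty e^{(x-y)/\eta}g(y)\dd y=\tfrac{1}{\eta}\int_x^\infty e^{(x-y)/\eta}g\dd y-g(x)$). This yields a transport equation for $U$ with the transport field $v(x)V(\cW_\eta^\eps)$, a zeroth-order local term $v(x)V'(\cW_\eta^\eps)U^2$ and a term $v'(x)V(\cW_\eta^\eps)U$, plus two nonlocal forcing terms. After multiplying by $\sgn(U)$, integrating in $x$, and performing an integration by parts on the transport term (the boundary terms vanish by \cref{lem:W_x_vanishing} and the behaviour at $+\infty$), the local contributions $v(x)V'(\cW_\eta^\eps)U|U|$ arising from $\partial_x(v(x)V(\cW_\eta^\eps)U)$ and from $v(x)V'(\cW_\eta^\eps)U^2\sgn(U)$ cancel exactly, and the $v'V(\cW_\eta^\eps)U$ term is exposed with the ``wrong'' sign structure that I will address below.

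Next I estimate the nonlocal right-hand side after taking $|\cdot|$ and applying Fubini: since $\int_{-\infty}^{y}e^{(x-y)/\eta}\dd x=\eta$, the integral $\tfrac{1}{\eta}\int_\R|A(t,x)|\dd x$ is bounded by $\int_\R v(y)|\cW V'(\cW)|(t,y)|U(t,y)|\dd y$, and similarly for $B$ with the $\tfrac{1}{\eta}$ canceling. The key observation is that by the maximum principle \cref{eq:maximum_principle} we have $\cW_\eta^\eps\geq 0$, combined with $V'\leq 0$ and $v\geq\underline v>0$; consequently the local term $\tfrac{1}{\eta}\int v\cW V'(\cW)|U|\dd x$ (which is $\leq 0$) exactly cancels with $\tfrac{1}{\eta}\int v|\cW V'(\cW)||U|\dd x$ coming from the $A$-bound. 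Only $\int v'(x)V(\cW_\eta^\eps)|U|\dd x$ from the transport-term integration by parts and $\int|v'(x)|\,|V(\cW_\eta^\eps)|\,|U|\dd x$ from the $B$-bound survive; together they are $2\int(v^\eps)'_+(x)V(\cW_\eta^\eps)|U|\dd x\leq 2\,\esssup_{x\in\R}(v^\eps)'(x)\,\|V\|_{\sL^\infty((0,\|v\cdot q_0\|_{\sL^\infty(\R)}))}\int_\R|U|\dd x$, where the range of $V$ is controlled again by \cref{eq:maximum_principle}.

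An application of Grönwall's lemma then yields the exponential factor claimed in \cref{eq:TV_estimate}. It remains to bound the initial datum: writing $\cW_\eta^\eps(0,x)=\int_0^\infty \tfrac{1}{\eta}e^{-s/\eta}(v^\eps q_0^\eps)(x+s)\dd s$ exhibits $\cW_\eta^\eps(0,\cdot)$ as the convolution of $v^\eps q_0^\eps$ with a probability density, so $|\cW_\eta^\eps(0,\cdot)|_{\sTV(\R)}\leq|v^\eps\cdot q_0^\eps|_{\sTV(\R)}$ by a standard convolution/TV estimate. The main obstacle is the delicate sign bookkeeping that produces the crucial cancellation of the $V'$ contributions -- without it one would only obtain an estimate blowing up as $\eta\to 0$ through the factor $\tfrac{1}{\eta}$; the cancellation is precisely what makes the bound uniform in $\eta$ and is the reason the specific nonlocal kernel $\cW_\eta[v\cdot q]$ (rather than $\cW_\eta[q]$) has been chosen.
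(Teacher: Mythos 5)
Your proposal follows the same route as the paper's own proof: differentiate the transport equation for \(\cW_\eta^\eps\) from \cref{lem:nonlocal_equation_W}, integrate \(\sgn(\partial_x\cW)\,\partial_{tx}\cW\) over \(\R\), integrate the transport term by parts (handling the boundary at \(-\infty\) via \cref{lem:W_x_vanishing}), apply Fubini so the \(1/\eta\) prefactors of the nonlocal terms cancel, use \(\cW\geq 0\) and \(V'\leq 0\) to dispose of the \(V'\) contributions and \(v'+|v'|=2(v')_+\) (the paper's decomposition \(v^{\eps}=v^{\eps,+}+v^{\eps,-}\)) to arrive at the OSL constant, and conclude with Gr\"onwall together with the convolution bound \(|\cW_\eta^\eps(0,\cdot)|_{\sTV}\leq|v^\eps q_0^\eps|_{\sTV}\). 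The two cancellations you single out as the crux are exactly the ones the paper exploits, so the argument is correct and essentially identical.
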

\begin{proof}
As \(\cW_{\eta}^{\eps}\) is according to \cref{prop:weak_stability} \(\sW^{2,\infty}(\OT)\) as \(q_{\eta}\in \sW^{1,\infty}(\OT)\) and \(\cW_{\eta}\big[v^{\eps}\cdot q_{\eta}^{\eps}\big]\) smooths the solution by one order), we differentiate through and obtain -- following the identity in \cref{lem:nonlocal_equation_W} -- and leaving out the dependency with regard to \(\eps\) and \(\eta\) and later the space time dependencies as well
\begin{align*}
\cW_{t,x}&=-v^{\eps}V(\cW)\cW_{xx}-v^{\eps}V'(\cW)\cW_{x}^{2}-{v^{\eps}}'V(\cW)\cW_{x} +\tfrac{1}{\eta}v^{\eps}\cW V'(\cW)\cW_{x}+{v^{\eps}}'V(\cW)\cW_{x}\\
&-\tfrac{1}{\eta^{2}}\int_{x}^{\infty}\exp(\tfrac{x-y}{\eta})v^{\eps}\cW V'(\cW)\cW_{y}\dd y-\tfrac{1}{\eta}\int_{x}^{\infty}\exp(\tfrac{x-y}{\eta}){v^{\eps}}'V(\cW)\cW_{y}\dd y\\
&=-v^{\eps}V(\cW)\cW_{xx}-v^{\eps}V'(\cW)\cW_{x}^{2}+\tfrac{1}{\eta}v^{\eps}\cW V'(\cW)\cW_{x}\\
&\quad -\tfrac{1}{\eta^{2}}\int_{x}^{\infty}\exp(\tfrac{x-y}{\eta})v^{\eps}\cW V'(\cW)\cW_{y}\dd y
-\tfrac{1}{\eta}\int_{x}^{\infty}\exp(\tfrac{x-y}{\eta}){v^{\eps}}'V(\cW)\cW_{y}\dd y.
\end{align*}
With this we can estimate the total variation as follows and have
\begin{align*}
    \tfrac{\dd}{\dd t}\int_{\R}|\cW_{x}(t,x)|\dd x&=\int_{\R} \sgn(\cW_{x}(t,x))\cW_{t,x}(t,x)\dd x\\
    &=-\int_{\R} \sgn(\cW_{x}(t,x))v^{\eps}(x)V(\cW(t,x))\cW_{xx}(t,x)\dd x\\
    &\quad -\int_{\R} \sgn(\cW_{x}(t,x))v^{\eps}(x)V'(\cW(t,x))\big(\cW_{x}(t,x)\big)^{2}\dd x\\
    &\quad-\int_{\R}\! \sgn(\cW_{x}(t,x))\tfrac{1}{\eta^{2}}\!\!\int_{x}^{\infty}\!\!\!\!\!\!\!\exp(\tfrac{x-y}{\eta})v^{\eps}(y)\cW(t,y) V'(\cW(t,y))\cW_{y}(t,y)\dd y\dd x\\
    &\quad -\int_{\R} \sgn(\cW_{x}(t,x))\tfrac{1}{\eta}\int_{x}^{\infty}\exp(\tfrac{x-y}{\eta}){v^{\eps}}'(y)V(\cW(t,y))\cW_{y}(t,y)\dd y\dd x\\
    &\quad +\tfrac{1}{\eta}\int_{\R} |\cW_{x}(t,x)|v^{\eps}(x)\cW(t,x) V'(\cW(t,x))\dd x
    \intertext{and an integration by parts in the first term}
    &=\int_{\R} |\cW_{x}(t,x)|{v^{\eps}}'(x)V(\cW(t,x))\dd x\\
    &\quad -\lim_{x\rightarrow\infty} |\cW_{x}(t,x)|v^{\eps}(x)V(\cW(t,x))+\lim_{x\rightarrow-\infty} |\cW_{x}(t,x)|v^{\eps}(x)V(\cW(t,x))\\
    &\quad+\tfrac{1}{\eta}\int_{\R} |\cW_{x}(t,x)|v^{\eps}(x)\cW(t,x) V'(\cW(t,x))\dd x\\
    &\quad-\int_{\R}\! \sgn(\cW_{x}(t,x))\tfrac{1}{\eta^{2}}\!\!\int_{x}^{\infty}\!\!\!\!\!\exp(\tfrac{x-y}{\eta})v^{\eps}(y)\cW(t,y) V'(\cW(t,y))\cW_{y}(t,y)\dd y\dd x\\
    &\quad -\int_{\R} \sgn(\cW_{x}(t,x))\tfrac{1}{\eta}\int_{x}^{\infty}\exp(\tfrac{x-y}{\eta}){v^{\eps}}'(y)V(\cW(t,y))\cW_{y}(t,y)\dd y\dd x\\
    \intertext{Decomposing $v^{\eps} \equiv v^{\eps,+} + v^{\eps,-}$ where \((v^{\eps,-})'\leqq 0\) and \((v^{\eps,+})' > 0\) and using \cref{lem:W_x_vanishing} for the boundary evaluation at \(x\rightarrow-\infty\) and leaving out the boundary evaluation at \(x\rightarrow\infty\) as it is non-positive}
    &\leq \int_{\R} |\cW_{x}(t,x)|\big((v^{\eps,+})'(x) + (v^{\eps,-})'(x)\big)V(\cW(t,x))\dd x\\
    &\quad+\tfrac{1}{\eta}\int_{\R} |\cW_{x}(t,x)|v^{\eps}\cW(t,x)V'(\cW(t,x))\dd x\\
    &\quad-\int_{\R}\! \sgn(\cW_{x}(t,x))\tfrac{1}{\eta^{2}}\!\int_{x}^{\infty}\!\!\!\!\exp(\tfrac{x-y}{\eta})v^{\eps}(y)\cW(t,y) V'(\cW(t,y))\cW_{y}(t,y)\dd y\dd x\\
    &\quad + \|(v^{\eps,+})'\|_{\sL^\infty(\R)}\tfrac{1}{\eta}\int_{\R} \int_{x}^{\infty}\exp(\tfrac{x-y}{\eta})V(\cW(t,y))|\cW_{y}(t,y)|\dd y\dd x\\ 
    &\quad -\int_{\R} \sgn(\cW_{x}(t,x))\tfrac{1}{\eta}\int_{x}^{\infty}\exp(\tfrac{x-y}{\eta})(v^{\eps,-})'(y)V(\cW(t,y))\cW_{y}(t,y)\dd y\dd x
    \intertext{and applying an change of order of integration and estimating the third and last term}
    &\leq \int_{\R} |\cW_{x}(t,x)|\big((v^{\eps,+})'(x) + (v^{\eps,-})'(x)\big)V(\cW(t,x))\dd x\\
    &\quad+\tfrac{1}{\eta}\int_{\R} |\cW_{x}(t,x)|v^{\eps}(x)\cW(t,x)V'(\cW(t,x))\dd x\\
    &\quad + \|(v^{\eps,+})'\|_{\sL^\infty(\R)}\tfrac{1}{\eta}\int_{\R} V(\cW(t,y))|\cW_{y}(t,y)| \int_{-\infty}^{y}\exp(\tfrac{x-y}{\eta})\dd x\dd y\\     
    &\quad -\tfrac{1}{\eta}\int_{\R}(v^{\eps,-})'(y)V(\cW(t,y))|\cW_{y}(t,y)| \int_{-\infty}^{y}\exp(\tfrac{x-y}{\eta})\dd x\dd y\\
    &\leq \int_{\R} |\cW_{x}(t,x)|\big((v^{\eps,+})'(x) + (v^{\eps,-})'(x)\big)V(\cW(t,x))\dd x\\
    &\quad+\tfrac{1}{\eta}\int_{\R} |\cW_{x}(t,x)|v^{\eps}(x)\cW(t,x)V'(\cW(t,x))\dd x\\
    &\quad + \|(v^{\eps,+})'\|_{\sL^\infty(\R)}\int_{\R} V(\cW(t,y))|\cW_{y}(t,y)| \dd y\\  
    &\quad -\int_{\R}(v^{\eps,-})'(y)V(\cW(t,y))|\cW_{y}(t,y)| \dd y\\
    &\leq 2\|(v^{\eps,+})'\|_{\sL^\infty(\R)} \|V\|_{\sL^\infty(0,\|\cW\|_{\sL^{\infty}(\R)})}|\cW(t,\cdot)|_{\sTV(\R)}.
\end{align*}
Applying Gr\"onwall's Lemma as in \cite[Appendix B k) ii]{evans} on the previous inequality, we obtain (reintroducing the dependencies)
\[
 \big|\cW_{\eta}^{\eps}(t,\cdot)\big|_{\sTV(\R)}\leq |\cW_{\eta}^{\eps}(0,\cdot)|_{\sTV(\R)}\exp\Big(2\big\|(v^{\eps,+})'\big\|_{\sL^\infty(\R)} \|V\|_{\sL^\infty(0,\|\cW_{\eta}^{\eps}\|_{\sL^{\infty}(\R)})}t\Big)\ \forall t\in[0,T].
\]
As 
\[
|\cW_{\eps}^{\eta}(0,\cdot)|_{\sTV(\R)}\leq \big|v^{\eps}\cdot q_{0}^{\eps}\big|_{\sTV(\R)},
\]
the claim follows when also recalling that according to \cref{theo:existence_uniqueness_maximum}
\[
\|\cW\|_{\sL^{\infty}((0,T);\sL^{\infty}(\R))}\overset{\eqref{eq:maximum_principle}}{\leq}\big\|v^{\eps}\cdot q_{0}^{\eps}\big\|_{\sL^{\infty}(\R)}.
\]
\end{proof}
\begin{remark}[Finiteness of \cref{eq:TV_estimate} in \cref{prop:TV_bounds} and more]
~
\begin{itemize}
\item
The finiteness of 
\[
\big|v^{\eps}\cdot q_{0}^{\eps}\big|_{\sTV(\R)}
\]
follows directly with the estimate
\[
\big|v^{\eps}\cdot q_{0}^{\eps}\big|_{\sTV(\R)}\leq \big\|v^{\eps}\big\|_{\sL^{\infty}(\R)}\big|q_{0}^{\eps}\big|_{\sTV(\R)}+\big\|q_{0}^{\eps}\big\|_{\sL^{\infty}(\R)}\big|v^{\eps}\big|_{\sTV(\R)}
\]
which is according to \cref{ass:input_datum} finite.
It is also worth mentioning that for discontinuities \(v^{\eps}\) which have a distributional derivative which is essentially bounded from above (it can not jump upwards) the total variation of the term grows maximally exponentially in time as stated in \cref{eq:TV_estimate}.
\item The upper OSL condition on the discontinuity of \(v\) as required in \cref{prop:TV_bounds} for obtaining \(\sTV\) bounds  on \(\cW_{\eta}^{\eps}\) (which later become uniform in \(\eta\) and \(\eps\) (see \cref{theo:strong_convergence_subsequences})) however lacks any interpretation and requires further analysis in the future to figure out whether this condition is purely technical and could be removed by an improved estimate or whether the lack of this condition would actually prevent the uniform \(\sTV\) bound to hold. On the numerical side, it seems like the nonlocal equation is converging to the corresponding local equation even for discontinuous \(v\) which can jump upwards (see \cref{sec:numerics}).
\end{itemize}
\end{remark}
\begin{lemma}[Total variation in time uniformly bounded]\label{lem:TV_bound_time}
Let the assumptions in \cref{prop:TV_bounds} hold. Then, the total variation in time of the nonlocal term \(\cW\) as in \cref{lem:nonlocal_equation_W} satisfies
for \(t\in[0,T]\)
\begin{equation}
\begin{aligned}
\|\partial_{t}\cW_{\eta}^{\eps}(t,\cdot)\|_{\sL^{1}(\R)}&\leq \|v^{\eps}\|_{\sL^{\infty}(\R)}\big(\|V\|_{\sL^{\infty}(X^{\eps})} +\|v^{\eps}\cdot q_{0}^{\eps}\|_{\sL^{\infty}(\R)}\|V'\|_{\sL^{\infty}(X^{\eps})}\big)\|\partial_{2}\cW_{\eta}^{\eps}(t,\cdot)\|_{\sL^{1}(\R)}\\
&\quad +2\big\|v^{\eps}\cdot q_{0}^{\eps}\big\|_{\sL^{\infty}(\R)}\|V\|_{\sL^{\infty}(X^{\eps})}\big\|{v^{\eps}}'\big\|_{\sL^{1}(\R)}
\end{aligned}
\label{eq:TV_bound_time}
\end{equation}
with \(X^{\eps}\coloneqq X[v^{\eps}\cdot g^{\eps}]\coloneqq \big(0,\|v^{\eps}\cdot q_{0}^{\eps}\|_{\sL^{\infty}(\R)}\big)\).
\end{lemma}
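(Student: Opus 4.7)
The plan is to take the $\sL^{1}(\R)$-norm in the spatial variable of the transport equation for $\cW_{\eta}^{\eps}$ established in \cref{lem:nonlocal_equation_W}, and to bound each of its three summands separately. The smoothness afforded by \cref{prop:weak_stability} guarantees that all pointwise derivatives and applications of Fubini encountered below are classical. Solving the equation in \cref{lem:nonlocal_equation_W} for $\partial_{t}\cW_{\eta}^{\eps}$ isolates the local transport term $-v^{\eps}V(\cW)\cW_{x}$, whose $\sL^1$-norm is immediately controlled by $\|v^{\eps}\|_{\sL^{\infty}(\R)}\|V\|_{\sL^{\infty}(X^{\eps})}\|\partial_{2}\cW_{\eta}^{\eps}(t,\cdot)\|_{\sL^{1}(\R)}$, together with two nonlocal integral terms that have to be treated in different ways.

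The first nonlocal term carries a $\tfrac{1}{\eta}$ prefactor and integrand $v^{\eps}(y)\cW(t,y)V'(\cW(t,y))\cW_{y}(t,y)$. Applying Fubini to exchange the $x$ and $y$ integrations and using $\int_{-\infty}^{y}\e^{(x-y)/\eta}\dd x=\eta$ cancels the $\tfrac{1}{\eta}$ exactly, and the maximum principle from \cref{theo:existence_uniqueness_maximum} then bounds $|\cW(t,y)|$ by $\|v^{\eps}\cdot q_{0}^{\eps}\|_{\sL^{\infty}(\R)}$, yielding a contribution of $\|v^{\eps}\|_{\sL^{\infty}(\R)}\|v^{\eps}\cdot q_{0}^{\eps}\|_{\sL^{\infty}(\R)}\|V'\|_{\sL^{\infty}(X^{\eps})}\|\partial_{2}\cW_{\eta}^{\eps}(t,\cdot)\|_{\sL^{1}(\R)}$. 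Combining this with the transport contribution accounts for the first summand in \cref{eq:TV_bound_time}.

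The main obstacle is the second nonlocal term, whose integrand $(v^{\eps})'(y)V(\cW(t,y))\cW_{y}(t,y)$ has \emph{no} $\tfrac{1}{\eta}$ prefactor. A naive Fubini estimate would then produce a factor $\eta\,\|(v^{\eps})'\|_{\sL^{\infty}(\R)}$, which is of the wrong order in $\eta$ and blows up as $\eps\to 0$. The key idea is to remove $\cW_{y}$ via the algebraic identity \cref{eq:identity_W} derived inside the proof of \cref{lem:nonlocal_equation_W}, namely $\cW_{y}(t,y)=\tfrac{1}{\eta}\bigl(\cW(t,y)-v^{\eps}(y)q_{\eta}^{\eps}(t,y)\bigr)$. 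Substituting this in manufactures a compensating $\tfrac{1}{\eta}$; the maximum principle then bounds $|\cW(t,y)-v^{\eps}(y)q_{\eta}^{\eps}(t,y)|$ by $2\|v^{\eps}\cdot q_{0}^{\eps}\|_{\sL^{\infty}(\R)}$, and the same Fubini computation as before absorbs the $\tfrac{1}{\eta}$ into the integral $\int_{-\infty}^{y}\e^{(x-y)/\eta}\dd x=\eta$, giving exactly $2\|v^{\eps}\cdot q_{0}^{\eps}\|_{\sL^{\infty}(\R)}\|V\|_{\sL^{\infty}(X^{\eps})}\|(v^{\eps})'\|_{\sL^{1}(\R)}$. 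Summing the three contributions yields \cref{eq:TV_bound_time}.
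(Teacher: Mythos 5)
Your proposal is correct and follows essentially the same route as the paper: take the $\sL^{1}$-norm of the transport equation from \cref{lem:nonlocal_equation_W}, bound the local term trivially, use Fubini with $\int_{-\infty}^{y}\e^{(x-y)/\eta}\dd x=\eta$ on the $\tfrac{1}{\eta}$-prefactored nonlocal term, and rescue the remaining nonlocal term via the identity \cref{eq:identity_W} combined with the maximum principle to produce the factor $2\|v^{\eps}\cdot q_{0}^{\eps}\|_{\sL^{\infty}(\R)}$. The only cosmetic difference is that the paper extracts $\|\eta\partial_{2}\cW\|_{\sL^{\infty}(\R)}$ as a sup-norm factor before applying Fubini, whereas you substitute the identity pointwise; both yield the identical bound.
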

\begin{proof}
Thanks to the previous identity \cref{eq:transport_equation_nonlocal} in \cref{lem:nonlocal_equation_W} we can estimate the nonlocal term as follows and leave out again the dependency on \(\eta\in\R_{>0}\ni\eps\) to obtain for \(t\in[0,T]\)
\begin{align*}
    \int_{\R}|\cW_{t}(t,x)|\dd x&\leq\int_{\R}v(x)V(\cW(t,x))|\cW_{x}(t,x)|\dd x\\
    &\quad -\tfrac{1}{\eta}\int_{\R}\int_{x}^{\infty}\e^{\frac{x-y}{\eta}}v(y)\cW(t,y)V'(\cW(t,y))|\cW_{y}(t,y)|\dd y\dd x\\
    &\qquad +\int_{\R}\int_{x}^{\infty}\e^{\frac{x-y}{\eta}}|v'(y)|V(\cW(t,y))|\cW_{y}(t,y)|\dd y\dd x
    \intertext{estimating trivially and changing order of integration in the latter two terms}
    &\leq \|v\|_{\sL^{\infty}(\R)}\|V\|_{\sL^{\infty}((0,\|v\cdot q_{0}\|_{\sL^{\infty}(\R)}))}\int_{\R}|\cW_{x}(t,x)|\dd x\\
    &\quad -\tfrac{1}{\eta}\int_{\R}\int_{-\infty}^{y}\e^{\frac{x-y}{\eta}}\dd x\ v(y)\cW(t,y)V'(\cW(t,y))|\cW_{y}(t,y)|\dd y\\
    &\quad +\|\eta\partial_{2}\cW(t,\cdot)\|_{\sL^{\infty}(\R)}\tfrac{1}{\eta}\int_{\R}\int_{-\infty}^{y}\e^{\frac{x-y}{\eta}}\dd x\ |v'(y)|V(\cW(t,y))\dd x\\
    &\leq \|v\|_{\sL^{\infty}(\R)}\|V\|_{\sL^{\infty}((0,\|v\cdot q_{0}\|_{\sL^{\infty}(\R)}))}\int_{\R}|\cW_{x}(t,x)|\dd x\\
    &\quad -\int_{\R}\ v(y)\cW(t,y)V'(\cW(t,y))|\cW_{y}(t,y)|\dd y\\
    &\quad +\|\eta\partial_{2}\cW(t,\cdot)\|_{\sL^{\infty}(\R)}\int_{\R}\ |v'(y)|V(\cW(t,y))\dd x\\
    &\overset{\eqref{eq:identity_W}}{\leq}\|v\|_{\sL^{\infty}(\R)}\|V\|_{\sL^{\infty}((0,\|v\cdot q_{0}\|_{\sL^{\infty}(\R)}))}\int_{\R}|\cW_{x}(t,x)|\dd x\\
    &\quad +\|v\|_{\sL^{\infty}(\R)}\|\cW(t,\cdot)\|_{\sL^{\infty}(\R)}\|V'\|_{\sL^{\infty}((0,\|v\cdot q_{0}\|_{\sL^{\infty}(\R)}))}\int_{\R}\ |\cW_{y}(t,y)|\dd y\\
    &\quad +2\|v\cdot q(t,\cdot)\|_{\sL^{\infty}(\R)}\|V\|_{\sL^{\infty}((0,\|v\cdot q_{0}\|_{\sL^{\infty}(\R)}))}\int_{\R}\ |v'(y)|\dd x,
\end{align*}
from which the conclusion follows with the maximum principle in \cref{eq:maximum_principle} (\cref{theo:existence_uniqueness_maximum}) and the definition of the nonlocal operator \(\cW\) in \cref{eq:nonlocal_term}.
\end{proof}
In the following theorem, we use the previously obtained bounds on the total variation in time and space to pass to the limit for the non-smoothed nonlocal term.
\begin{theorem}[Strong convergence of subsequences of \(q_{\eta}\)]\label{theo:strong_convergence_subsequences}
Let \cref{ass:input_datum} hold. Assume in addition that  \(v\) is one-sided (upper) Lipschitz-continuous, i.e.,\ 
\begin{equation}
\exists L\in\R\ \forall (x,y)\in\R^{2},\ x\geq y:\ v(x)-v(y)\leq L(x-y)\ \label{eq:OSL}
.\end{equation}
Then, the nonlocal term in \cref{defi:weak_solution_2} \(\cW_{\eta}[v\cdot q]\) satisfies
\begin{align*}
    \sup_{\eta\in (0,1)}\big|\cW_{\eta}[v\cdot q](t,\cdot)\big|_{\sTV(\R)}\leq |v\cdot q_{0}|_{\sTV(\R)}\exp\Big(2tL\|V\|_{\sL^{\infty}(0,\|v\cdot q_{0}\|_{\sL^{\infty}(\R)})}\Big),\ \forall t\in[0,T].
\end{align*}
Moreover, there exists \(q_{*}\in \sC\big([0,T];\sL^{1}(\R)\big)\) and a subsequence \((\eta_{k})_{k\in\N}\subset \R: \lim_{k\rightarrow\infty} \eta_{k}=0\) so that the solution to the nonlocal conservation law in \cref{defi:weak_solution_2} \(q_{\eta}\in \sC([0,T];\sL^{1}(\R))\) as well as the nonlocal operator \(\cW_{\eta}[v\cdot q]\in \sW^{1,\infty}(\OT)\) satisfy
\[
\lim_{k\rightarrow\infty} \|q_{\eta_{k}}-q_{*}\|_{\sC([0,T];\sL^{1}_{\text{loc}}(\R))}=0\ \wedge \ \lim_{k\rightarrow\infty} \|\cW_{\eta_{k}}[v\cdot q_{\eta_{k}}]-v\cdot q_{*}\|_{\sC([0,T];\sL^{1}_{\text{loc}}(\R))}=0.
\]
\end{theorem}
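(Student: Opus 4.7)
The plan is to first establish the uniform (in $\eta$) $\sTV$-bound on $\cW_\eta[v\cdot q]$ by a passage to the limit in the smoothing parameter $\eps$ in \cref{prop:TV_bounds}, then combine this with the time estimate of \cref{lem:TV_bound_time} to run an Aubin--Lions/Helly compactness argument for $\cW_\eta[v\cdot q]$, and finally identify the limit and transfer the convergence to $q_\eta$ itself via the first-order identity \cref{eq:identity_W}.

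First, I would choose the regularisation $v^\eps\coloneqq v\ast\rho_\eps$ with a standard nonnegative mollifier. Such a convolution preserves the OSL constant, i.e., $\esssup_{x\in\R}(v^\eps)'(x)\leq L$, and also gives the uniform bound $\|(v^\eps)'\|_{\sL^1(\R)}\leq |v|_{\sTV(\R)}$. Substituting into the bound of \cref{prop:TV_bounds} yields
\[
\big|\cW_\eta^\eps(t,\cdot)\big|_{\sTV(\R)}\leq |v^\eps\cdot q_0^\eps|_{\sTV(\R)}\exp\!\Big(2tL\|V\|_{\sL^\infty(0,\|v\cdot q_0\|_{\sL^\infty(\R)})}\Big),
\]
an estimate uniform in both $\eta$ and $\eps$. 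Letting $\eps\to 0$ and using the $\sC([0,T];\sL^1(\R))$--stability of \cref{prop:weak_stability} together with lower semicontinuity of the $\sTV$-seminorm under $\sL^1_{\loc}$-convergence delivers the first assertion.

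Next, to obtain temporal equicontinuity, I would insert the same mollification into \cref{lem:TV_bound_time}. The right-hand side of \cref{eq:TV_bound_time} then stays uniformly bounded in $\eps$ and $\eta$ because (i) the spatial $\sTV$-bound just established controls $\|\partial_x\cW_\eta^\eps(t,\cdot)\|_{\sL^1(\R)}$, (ii) $\|(v^\eps)'\|_{\sL^1(\R)}\leq |v|_{\sTV(\R)}$, and (iii) the maximum principle of \cref{theo:existence_uniqueness_maximum} gives $\eps$-uniform $\sL^\infty$-bounds on $v^\eps\cdot q_\eta^\eps$. Passing $\eps\to 0$ yields
\[
\|\partial_t\cW_\eta[v\cdot q](t,\cdot)\|_{\sL^1(\R)}\leq C,\qquad t\in[0,T],\ \eta\in(0,1),
\]
for a constant $C$ depending only on the data. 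With spatial $\sTV$-compactness on each bounded interval (Helly) and this uniform $\sL^1$-modulus of continuity in time, a Kolmogorov--Riesz/diagonal-extraction argument produces a subsequence $\eta_k\to 0$ and a limit $W_*\in \sC([0,T];\sL^1_{\loc}(\R))$ with
\[
\lim_{k\to\infty}\big\|\cW_{\eta_k}[v\cdot q_{\eta_k}]-W_*\big\|_{\sC([0,T];\sL^1_{\loc}(\R))}=0.
\]

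Finally, I would identify $W_*$ using \cref{eq:identity_W}: from $v(x)q_\eta(t,x)=\cW_\eta(t,x)-\eta\,\partial_x\cW_\eta(t,x)$ and the just derived uniform $\sTV$-bound,
\[
\|v\cdot q_\eta(t,\cdot)-\cW_\eta[v\cdot q](t,\cdot)\|_{\sL^1(\R)}=\eta\,\big|\cW_\eta[v\cdot q](t,\cdot)\big|_{\sTV(\R)}=O(\eta),
\]
uniformly in $t\in[0,T]$. Hence $v\cdot q_{\eta_k}\to W_*$ in $\sC([0,T];\sL^1_{\loc}(\R))$, and since $v\geq\uv>0$, setting $q_*\coloneqq W_*/v$ gives $q_{\eta_k}\to q_*$ in $\sC([0,T];\sL^1_{\loc}(\R))$ with $W_*=v\cdot q_*$, as claimed. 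The main obstacle I anticipate is the clean passage $\eps\to 0$ in both \cref{prop:TV_bounds} and \cref{lem:TV_bound_time}: one must select the mollifier of $v$ so that both the one-sided Lipschitz constant $L$ and $\|(v^\eps)'\|_{\sL^1(\R)}\leq|v|_{\sTV(\R)}$ survive uniformly, and then exploit lower semicontinuity of $\sTV$ under the $\sC([0,T];\sL^1(\R))$-convergence of \cref{prop:weak_stability} to carry the bounds through to the non-smooth nonlocal solution.
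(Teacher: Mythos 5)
Your proposal is correct and follows essentially the same route as the paper: a uniform-in-$(\eta,\eps)$ spatial $\sTV$ bound from \cref{prop:TV_bounds} combined with the temporal estimate of \cref{lem:TV_bound_time}, an Aubin--Lions/Simon-type compactness argument in $\sC([0,T];\sL^{1}_{\loc}(\R))$ (the paper invokes Simon's theorem with Helly for the spatial part, which is the same machinery as your Helly plus Kolmogorov--Riesz extraction), and identification of the limit of $q_{\eta_k}$ through the identity \cref{eq:identity_W} giving $\|v\cdot q_{\eta}-\cW_{\eta}[v\cdot q]\|_{\sL^{1}}=O(\eta)$. Your explicit remarks on the mollifier preserving the OSL constant and on lower semicontinuity of $\sTV$ under $\sL^{1}_{\loc}$ convergence supply details the paper leaves implicit.
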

\begin{proof}
Let \(\eta\in\R_{>0}\) be given, and smooth with a standard mollifier \(\{\phi_{\eps}\}_{\eps\in\R_{>0}}\subset \sC^{\infty}_{\text{c}}(\R)\) as in \cite[Rem. C.18, ii]{leoni} initial datum as well as discontinuity.
On the smoothed solution \(q^{\eps}_{\eta}\) we can apply \cref{prop:TV_bounds}
and have (now, we really make all dependencies visible, this is in particular the dependency of the solution \(q_{\eta}\) on the nonlocal term \(\eta\in\R_{>0}\))
\[
\big|\cW_{\eta}\big[v^{\eps}\cdot q^{\eps}_{\eta}\big](t,\cdot)\big|_{\sTV(\R)}\leq \big|v^{\eps}\cdot q_{0}^{\eps}\big|_{\sTV(\R)}\exp\Big(2t\esssup_{x\in\R}{v^{\eps}}'(x)\|V\|_{\sL^{\infty}(0,\|v\cdot q_{0}\|_{\sL^{\infty}(\R)})}\Big).
\]
As can be seen, the right hand side is uniform in \(\eta\) and as we have -- thanks to the standard mollifier -- 
\begin{gather*}
|v^{\eps}|_{\sTV(\R)}\leq |v|_{\sTV(\R)}\ \wedge\ \|v^{\eps}\|_{\sL^{\infty}(\R)}\leq \|v\|_{\sL^{\infty}(\R)}\ \wedge\ |q_{0}^{\eps}|_{\sTV(\R)}\leq |q_{0}|_{\sTV(\R)} \\ \wedge\ \|q_{0}^{\eps}\|_{\sL^{\infty}(\R)}\leq \|q_{0}\|_{\sL^{\infty}(\R)}\ \wedge\ \esssup_{x\in\R}{v^{\eps}}'(x)\leq L
\end{gather*}
we can let \(\eps\rightarrow 0\) and have for \(t\in[0,T]\)
\begin{equation}
\begin{split}
\sup_{\eta\in\R_{>0}}\big|\cW_{\eta}\big[v\cdot q_{\eta}\big](t,\cdot)\big|_{\sTV(\R)}&\leq \big(\|v\|_{\sL^{\infty}(\R)}|q_{0}|_{\sTV(\R)}+|v|_{\sTV(\R)}\|q_{0}\|_{\sL^{\infty}(\R)}\big)\\
&\qquad\cdot\exp\Big(2tL\|V\|_{\sL^{\infty}((0,\|v\cdot q\|_{\sL^{\infty}(\R)}))}\Big).
\end{split}
\label{eq:uniform_TV_estimate}
\end{equation}
This estimate can also be made uniform  in \(t\in[0,T]\) and we then have
\begin{equation}
\begin{split}
\sup_{t\in[0,T]}\sup_{\eta\in\R_{>0}}\big|\cW_{\eta}\big[v\cdot q_{\eta}\big](t,\cdot)\big|_{\sTV(\R)}&\leq \big(\|v\|_{\sL^{\infty}(\R)}|q_{0}|_{\sTV(\R)}+|v|_{\sTV(\R)}\|q_{0}\|_{\sL^{\infty}(\R)}\big)\\
&\qquad\cdot\exp\Big(2TL\|V\|_{\sL^{\infty}((0,\|v\cdot q\|_{\sL^{\infty}(\R)}))}\Big).
\end{split}
\label{eq:uniform_TV_estimate_time_uniform}
\end{equation}
As the right hand side is bounded, we can use a classical compactness argument to show the claim. We take advantage of \cite[Thm. 1, p.71]{simon} where the following is stated:
Let \(\big(B;\|\cdot\|_{B}\big)\) be a real Banach space and \(F\subset \sC([0,T];B)\). Then, \(F\) is relatively compact in \(\sC([0,T];B)\) if the following conditions are met:
\begin{enumerate}
    \item \(\forall (t_{1},t_{2})\in(0,T)^{2}: t_{1}< t_{2} \quad \Big\{\int_{t_{1}}^{t_{2}}f(t)\dd t:\ f\in F\Big\}\) is relatively compact in \(B\)\label{item:1}.
    \item \(\lim_{h\rightarrow 0}\sup_{f\in F}\|f(t+h)-f(t)\|_{\sC([0,T-h];B)}=0\).\label{item:2}
\end{enumerate}
In our setting, we have
\[
F\coloneqq \Big\{\cW_{\eta}[v\cdot q_{\eta}]\in\sC([0,T];\sL^{1}_{\loc}(\R)): \eta\in\R_{\geq 0},\ \cW_{\eta}[v\cdot q_{\eta}]\text{ as in \cref{defi:weak_solution_2}} \Big\},\ B\coloneqq \sL^{1}_{\loc}(\R).
\]
(Obviously, \(\sL^{1}_{\loc}(\R)\) is not a Banach space, so just think of that we do the compactness on any open bounded interval.)
We start with \cref{item:1} and have for \(\eta\in\R_{>0}\)
\begin{align*}
&\bigg|\int_{t_{1}}^{t_{2}}\cW_{\eta}[v\cdot q_{\eta}](s,\cdot)\dd s\bigg|_{\sTV(\R)}{\leq} \int_{t_{1}}^{t_{2}}\big|\cW_{\eta}[v\cdot q_{\eta}](s,\cdot)\big|_{\sTV(\R)}\dd s\\
&\overset{\eqref{eq:uniform_TV_estimate}}{\leq}\tfrac{\|v\|_{\sL^{\infty}(\R)}|q_{0}|_{\sTV(\R)}+|v|_{\sTV(\R)}\|q_{0}\|_{\sL^{\infty}(\R)}}{2L\|V\|_{\sL^{\infty}((0,\|v\cdot q\|_{\sL^{\infty}(\R)}))}}\Big(\e^{2t_{2}L\|V\|_{\sL^{\infty}((0,\|v\cdot q\|_{\sL^{\infty}(\R)}))}}-\e^{2t_{1}L\|V\|_{\sL^{\infty}((0,\|v\cdot q\|_{\sL^{\infty}(\R)}))}}\Big).
\end{align*}
However, this means that there is a uniform bound (with respect to \(\eta\)) on the total variation of 
\[
x\mapsto \int_{t_{1}}^{t_{2}}\cW_{\eta}[v\cdot q_{\eta}](s,x)\dd s
\]
and we can apply a type of Helly's compactness result \cite[Thm. 13.35]{leoni} to deduce that
\[
\bigg\{\int_{t_{1}}^{t_{2}}f(t)\dd t:\ f\in F\bigg\} \overset{c}{\hookrightarrow} \sL^{1}_{\loc}(\R)\ \forall (t_{1},t_{2})\in (0,T)^{2},\ t_{1}<t_{2}.
\]
As compactness implies relative compactness, we have met the requirements of \cref{item:1}.
For \cref{item:2}, let \(h\in(0,T),\) \(t\in (0,T-h)\) be given and estimate for \(\eta\in\R_{>0}\) and \(K\subset \R\) compact
\begin{align}
    &\int_{K}\big|\cW_{\eta}[v\cdot q_{\eta}](t+h,x)-\cW_{\eta}[v\cdot q_{\eta}](t,x)\big|\dd x\label{eq:time_estimate_compactness}\\
    &=\int_{K}\Big|\int_{t}^{t+h}\partial_{s}\cW_{\eta}[v\cdot q_{\eta}](s,x)\dd s\Big|\dd x\notag
    \intertext{introducing the smoothed version for \(\eps\in\R_{>0}\) as stated as well in \cref{prop:weak_stability,lem:TV_bound_time}}
    &\leq \int_{K}\Big|\int_{t}^{t+h}\!\!\!\!\!\!\!\partial_{s}\cW_{\eta}[v\cdot q_{\eta}](s,x)-\partial_{s}\cW_{\eta}[v^{\eps}\cdot q_{\eta}^{\eps}](s,x)\dd s\Big|\dd x+ \int_{K}\Big|\int_{t}^{t+h}\!\!\!\!\!\!\!\partial_{s}\cW_{\eta}[v^{\eps}\cdot q_{\eta}^{\eps}](s,x)\dd s\Big|\dd x\notag\\
    &\leq\int_{K}\Big|\int_{t}^{t+h}\!\!\!\!\!\!\!\partial_{s}\cW_{\eta}[v\cdot q_{\eta}](s,x)-\partial_{s}\cW_{\eta}[v^{\eps}\cdot q_{\eta}^{\eps}](s,x)\dd s\Big|\dd x
    + \!\int_{K}\int_{t}^{t+h}\!\big|\partial_{s}\cW_{\eta}[v^{\eps}\cdot q_{\eta}^{\eps}](s,x)\big|\dd s\dd x.\notag
\end{align}
The first term converges to zero for \(\eps\rightarrow 0,\) so we focus on the second and recall \cref{lem:TV_bound_time} to obtain
\begin{align*}
    &\int_{K}\!\!\int_{t}^{t+h}\!\!\!\big|\partial_{s}\cW_{\eta}[v^{\eps}\cdot q_{\eta}^{\eps}](s,x)\big|\dd s\dd x=\int_{t}^{t+h}\!\!\!\!\int_{K}\big|\partial_{s}\cW_{\eta}[v^{\eps}\cdot q_{\eta}^{\eps}](s,x)\big|\dd x\dd s\\
    &\leq h\!\sup_{t\in[0,T]}\int_{\R}\Big|\partial_{t}\cW_{\eta}[v^{\eps}\cdot q_{\eta}^{\eps}](t,x)\Big|\dd x\\
    &\quad\overset{\eqref{eq:TV_bound_time}}{\leq} h\sup_{t\in[0,T]} \bigg(\|v^{\eps}\|_{\sL^{\infty}(\R)}\big(\|V\|_{\sL^{\infty}((0,\|v^{\eps}\cdot q_{0}^{\eps}\|_{\sL^{\infty}(\R)}))} +\|v^{\eps}\cdot q_{0}^{\eps}\|_{\sL^{\infty}(\R)}\|V'\|_{\sL^{\infty}((0,\|v^{\eps}\cdot q_{0}^{\eps}\|_{\sL^{\infty}(\R)}))}\big)\\
&\qquad\qquad \cdot \|\partial_{2}\cW_{\eta}^{\eps}(t,\cdot)\|_{\sL^{1}(\R)}+2\big\|v^{\eps}\cdot q_{0}^{\eps}\big\|_{\sL^{\infty}(\R)}\|V\|_{\sL^{\infty}((0,\|v^{\eps}\cdot q_{0}^{\eps}\|_{\sL^{\infty}(\R)}))}\big\|{v^{\eps}}'\big\|_{\sL^{1}(\R)}\bigg)
\intertext{and thanks to the properties of the smoothing}
&\leq h\sup_{t\in[0,T]} \bigg(\|v\|_{\sL^{\infty}(\R)}\big(\|V\|_{\sL^{\infty}((0,\|v\cdot q_{0}\|_{\sL^{\infty}(\R)}))} +\|v\cdot q_{0}\|_{\sL^{\infty}(\R)}\|V'\|_{\sL^{\infty}((0,\|v\cdot q_{0}\|_{\sL^{\infty}(\R)}))}\big)\\
&\qquad\qquad \cdot \big|\cW_{\eta}(t,\cdot)\big|_{\sTV(\R)}+2\big\|v\cdot q_{0}\big\|_{\sL^{\infty}(\R)}\|V\|_{\sL^{\infty}((0,\|v\cdot q_{0}\|_{\sL^{\infty}(\R)}))}|v|_{\sTV(\R)}\bigg).
\end{align*}
The last term is uniform in \(\eps\in\R_{>0}\) and we have by \cref{eq:uniform_TV_estimate_time_uniform} that \(|\cW_{\eta}|_{\sTV(\R)}\) is uniformly bounded in \(\eta\in\R_{>0}\). The remaining terms are bounded as well so that we can take in \cref{eq:time_estimate_compactness} the supremum over \(t\in[0,T-h]\) and obtain \cref{item:2}.
Altogether, we have 
\[
F\overset{\text{r.c.}}{\hookrightarrow} \sC\big([0,T];\sL^{1}_{\loc}(\R)\big)
\]
and thus, we obtain a subsequence \(\{\eta_{k}\}_{k\in\N_{\geq 1}},\ \lim_{k\rightarrow\infty}\eta_{k}=0\) and \(\cW_{*}\in F\) so that
\begin{equation}
\lim_{k\rightarrow\infty} \|\cW_{\eta_{k}}-\cW_{*}\|_{\sC([0,T];\sL^{1}_{\loc}(\R))}=0.\label{eq:nonlocal_limit_point}
\end{equation}
Recalling \cref{eq:identity_W}, we have for every \(K\subset\R\) compact and \(t\in[0,T]\)
\[
\eta_{k}\big|\cW_{\eta_{k}}(t,\cdot)\big|_{\sTV(\R)}\geq \eta_{k}\big|\cW_{\eta_{k}}(t,\cdot)\big|_{\sTV(K)}=\|\cW_{\eta_{k}}(t,\cdot)-v(\cdot)q_{\eta_{k}}(t,\cdot)\|_{\sL^{1}(K)}
\]
and by \cref{eq:nonlocal_limit_point}, we thus have (recall that in particular \(v\in\sL^{\infty}(\R;\R_{\geq \uv})\)
\[
\lim_{k\rightarrow\infty} \|q_{\eta_{k}}-\tfrac{\cW_{*}}{v}\|_{\sC([0,T];\sL^{1}(K))},
\]
so that we can identify as limit point 
\begin{equation}
q_{*}\equiv \tfrac{\cW_{*}}{v}.\label{eq:limit_q_cW}
\end{equation}
This concludes the proof.
\end{proof}
\subsection{Convergence of the nonlocal weak solution to the local one -- Entropy admissibility}
Similar to what had been proven in \cite{bressan2021entropy} for nonlocal conservation laws without discontinuity, we show in this section that the strong convergence of the nonlocal solution \(q_{\eta}\rightarrow q^{*}\) implies that \(q^{*}\) itself satisfies the Entropy condition as outlined in \cref{defi:entropy_solution}. This is detailed in the following \cref{theo:convergence_entropy_solutions}:
\begin{theorem}[Strong convergence implies Entropy admissibility]\label{theo:convergence_entropy_solutions}
Let \cref{ass:input_datum} hold and assume that the solution to the discontinuous nonlocal conservation law in \cref{defi:discontinuous_nonlocal_conservation_law} converges strongly in \(\sC([0,T];\sL^{1}(\R))\) when \(\eta\rightarrow 0\) to a limit \(q_{*}\in\sC([0,T];\sL^{1}(\R))\), i.e.\ 
\[
\lim_{\eta\rightarrow 0} \|q_{\eta}-q_{*}\|_{\sC([0,T];\sL^{1}(\R))}=0.
\]
In addition, assume 
\begin{equation}
x\mapsto x V(x) \text{ strictly concave }\Longleftrightarrow 2V'(x)+x V''(x)<0,\ \forall x\in \big[0, \|v\cdot q_{0}\|_{\sL^{\infty}(\R)}\big].\label{eq:strict_concavity}
\end{equation}
Then, \(q_{*}\) is a \textbf{weak Entropy solution} in the sense of \cref{defi:entropy_solution} and thus, \(q_{*}\) is the (unique) Entropy solution of the discontinuous \textbf{local} conservation law in \cref{defi:discontinous_local}.
\end{theorem}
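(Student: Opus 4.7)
The plan is to derive a \emph{pre-limit} entropy inequality on the smoothed problem, exploiting the algebraic identity $vq = \cW_\eta - \eta\,\partial_x \cW_\eta$ peculiar to the exponential kernel, and then pass to the double limit $\epsilon\to 0$, $\eta\to 0$ using the assumed strong convergence. Fix a convex $\alpha \in \sW^{2,\infty}_{\loc}(\R)$, let $\beta$ be the entropy flux with $\beta'=\alpha' f'$, and pick a non-negative test function $\phi\in \sW^{1,\infty}_{\text{c}}((-42,T)\times\R)$. Smoothing the datum and discontinuity as $(q_0^\epsilon,v^\epsilon)$ as in \cref{prop:weak_stability} gives a strong solution $q_\eta^\epsilon$ so that $\rho_\eta^\epsilon \coloneqq v^\epsilon q_\eta^\epsilon$ is differentiable. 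Multiplying the PDE by $\alpha'(\rho_\eta^\epsilon)\phi$, exploiting $\alpha'(\rho)q_t = \partial_t\alpha(\rho)/v$ (as $v$ is $t$-independent), and integrating by parts in $t$ and $x$ produces the identity
\[
\iint_{\OT} \alpha(\rho_\eta^\epsilon)\tfrac{\phi_t}{v^\epsilon} + V(\cW_\eta^\epsilon)\alpha(\rho_\eta^\epsilon)\phi_x \dd x \dd t + \int_{\R} \alpha(v^\epsilon q_0^\epsilon)\tfrac{\phi(0,\cdot)}{v^\epsilon} \dd x = \iint_{\OT} H(\rho_\eta^\epsilon)V'(\cW_\eta^\epsilon)\partial_x \cW_\eta^\epsilon\,\phi \dd x \dd t,
\]
where $H(s)\coloneqq s\alpha'(s)-\alpha(s)$.

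The crux of the argument is a sign decomposition of the right-hand side. Split $H(\rho) = H(\cW) + [H(\rho)-H(\cW)]$; defining $\gamma(s)\coloneqq \beta(s)-V(s)\alpha(s)$ one verifies $\gamma'(s)=H(s)V'(s)$, so $H(\cW)V'(\cW)\partial_x \cW = \partial_x \gamma(\cW)$ and integration by parts contributes $-\iint \gamma(\cW_\eta^\epsilon)\phi_x$. For the residual, \cref{eq:identity_W} rewritten as $\rho-\cW = -\eta\partial_x\cW$ together with the mean value theorem yields, for some intermediate $\xi\in\bigl[0,\|v^\epsilon q_0^\epsilon\|_{\sL^\infty(\R)}\bigr]$,
\[
[H(\rho_\eta^\epsilon)-H(\cW_\eta^\epsilon)]V'(\cW_\eta^\epsilon)\partial_x \cW_\eta^\epsilon = -H'(\xi)\,V'(\cW_\eta^\epsilon)\,\eta\,(\partial_x \cW_\eta^\epsilon)^2.
\]
Now $H'(\xi) = \xi\alpha''(\xi)\geq 0$ by convexity of $\alpha$ and $\xi\geq 0$ from the maximum principle \cref{theo:existence_uniqueness_maximum}, while $V'\leq 0$ by \cref{ass:input_datum}; with $\phi\geq 0$ the residual contribution is thus non-negative. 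Consequently the identity upgrades to the pre-limit \emph{inequality}
\[
\iint_{\OT} \alpha(\rho_\eta^\epsilon)\tfrac{\phi_t}{v^\epsilon} + \bigl[V(\cW_\eta^\epsilon)\alpha(\rho_\eta^\epsilon)+\gamma(\cW_\eta^\epsilon)\bigr]\phi_x \dd x \dd t + \int_{\R} \alpha(v^\epsilon q_0^\epsilon)\tfrac{\phi(0,\cdot)}{v^\epsilon} \dd x \geq 0.
\]

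Finally I pass to the limits. As $\epsilon\to 0$, \cref{prop:weak_stability} gives $\rho_\eta^\epsilon \to vq_\eta$ in $\sC([0,T];\sL^1(\R))$, the corollary following it gives $\cW_\eta^\epsilon \to \cW_\eta[vq_\eta]$ uniformly, and $1/v^\epsilon \to 1/v$ pointwise a.e.\ with uniform bound $1/\uv$; continuity of $\alpha,V,\gamma$ on the compact range dictated by the maximum principle and dominated convergence retain the inequality for the non-smoothed data. For $\eta\to 0$, the hypothesis $q_\eta \to q_*$ in $\sC([0,T];\sL^1(\R))$ delivers $\rho_\eta \to \rho_*\coloneqq vq_*$ in $\sC([0,T];\sL^1(\R))$, and the standard approximate-identity bound $\|\cW_\eta[\rho_\eta]-\rho_*\|_{\sL^1}\leq \|\rho_\eta-\rho_*\|_{\sL^1}+\|\cW_\eta[\rho_*]-\rho_*\|_{\sL^1}$ combined with $\rho_*\in \sL^1$ gives $\cW_\eta\to \rho_*$ strongly. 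Passing to the limit and using $V(\rho_*)\alpha(\rho_*)+\gamma(\rho_*)=\beta(\rho_*)$ recovers precisely the entropy inequality of \cref{defi:entropy_solution}, so $q_*$ is a weak entropy solution; uniqueness from \cref{lem:existence_entropy} (or alternatively, verifying only $\alpha(s)=s^2$ and invoking \cref{lem:strict_concave_flux} via the assumed strict concavity of $f$) identifies $q_*$ as the entropy solution of \cref{defi:discontinous_local}. The main obstacle is the sign argument in the second step, which relies essentially on the exponential-kernel identity $\rho = \cW - \eta\partial_x\cW$ to recast the nonlocal defect as the definite quadratic form $-H'(\xi)V'(\cW)\eta(\partial_x\cW)^2$; without this structural coincidence --- precisely the reason the kernel is restricted to the exponential in \cref{rem:nonlocal_kernel} --- the nonlocal correction would not converge to the correct entropic dissipation.
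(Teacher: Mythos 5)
Your proof is correct, but it takes a genuinely different route from the paper's. The paper tests the equation only against the single quadratic entropy \(\alpha(x)=x^{2}\), runs an explicit chain of integrations by parts, isolates the sign-definite term \(\eta V'(\cW_{\eta})(\partial_{x}\cW_{\eta})^{2}\big(\eta\partial_{x}\cW_{\eta}-2\cW_{\eta}\big)\) with \(\eta\partial_{x}\cW_{\eta}-2\cW_{\eta}=-\cW_{\eta}-\rho_{\eta}\leq 0\), and must then invoke \cref{lem:strict_concave_flux} (one strictly convex entropy suffices under strict concavity of \(f\)) to conclude; its error terms are controlled via \(\|\cW_{\eta}[\rho_{\eta}]-\rho_{\eta}\|\leq\eta\,|\cW_{\eta}[\rho_{\eta}]|_{\sTV}\), i.e.\ through the uniform \(\sTV\) bounds of \cref{prop:TV_bounds} and \cref{theo:strong_convergence_subsequences}. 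You instead verify the inequality for \emph{every} admissible pair \((\alpha,\beta)\) at once: the entropy production \(H(\rho)V'(\cW)\partial_{x}\cW\) with \(H(s)=s\alpha'(s)-\alpha(s)\) splits into the exact flux \(\partial_{x}\gamma(\cW)\) plus the remainder \(-\tfrac{1}{\eta}\big[H(\rho)-H(\cW)\big](\rho-\cW)V'(\cW)\geq 0\). This buys two things: the strict-concavity hypothesis and \cref{lem:strict_concave_flux} become superfluous for the admissibility part (you land directly in \cref{defi:entropy_solution} and conclude uniqueness from \cref{lem:existence_entropy}), and your limit passage uses only the assumed strong convergence together with the approximate-identity property of the exponential kernel, not the \(\sTV\) bounds --- whose OSL hypothesis is not part of this theorem's statement, so your argument is in fact the more faithful one to the hypotheses as written. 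Two cosmetic repairs: for \(\alpha\in\sW^{2,\infty}_{\loc}\) the mean value theorem with an evaluation point \(\xi\) is not literally available, but you only need that \(H\) is non-decreasing on \(\big[0,\|v\cdot q_{0}\|_{\sL^{\infty}(\R)}\big]\), which follows from \(H'(s)=s\alpha''(s)\geq 0\) a.e.\ there (using \(\rho,\cW\geq 0\) from the maximum principle); and the convergence \(\cW_{\eta}[\rho_{*}]\to\rho_{*}\) uniformly in \(t\) (rather than for each fixed \(t\)) should be justified by the \(\sL^{1}\)-compactness of \(\{\rho_{*}(t,\cdot):t\in[0,T]\}\) or, more simply, by dominated convergence in \(t\) inside the space--time integral.
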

\begin{proof}
We plug the smoothed version of the nonlocal conservation law, i.e. \(\{q_{\eta}^{\eps}\}_{\eps\in\R_{>0}}\subset\sC^{1}(\OT)\) as in \cref{prop:weak_stability} into the Entropy condition \cref{eq:Entropy} and have with the abbreviation \(\rho^{\eps}_{\eta}\equivd v^{\eps}q_{\eta}^{\eps}\) and an integration by parts
\begin{align*}
 \mEF[\phi,\alpha,q_{\eta}^{\eps}]
 &=-\iint_{\OT}\phi(t,y)\Big(\alpha'(\rho^{\eps}_{\eta}(t,y))\partial_{t}\rho_{\eta}^{\eps}(t,y)\tfrac{1}{v^{\eps}(y)}+\beta'(\rho^{\eps}_{\eta}(t,y))\partial_{y}\rho^{\eps}_{\eta}(t,y)\Big)\dd y
 \intertext{\(\alpha(x)=x^{2}\ \forall x\in\R\) as we have strictly concave flux so that \cref{lem:strict_concave_flux} is applicable, and thus \(\beta'\equiv \alpha'\cdot f'\),\ \(f(x)=xV(x)\)}
 &=-\iint_{\OT}\phi(t,y)\Big(2\rho_{\eta}^{\eps}\partial_{t}\rho_{\eta}^{\eps}\tfrac{1}{v^{\eps}(y)}+2\rho_{\eta}^{\eps} f'(\rho_{\eta}^{\eps})\partial_{y}\rho_{\eta}^{\eps}\Big)\dd y\\
 \intertext{plugging in the strong form for \(\partial_{t}\rho^{\eps}_{\eta}\) as in \cref{eq:rho_eta} in \cref{rem:scaling_nonlocal}}
 &=-\iint_{\OT}\phi(t,y)\Big(-2\rho_{\eta}^{\eps}\partial_{y}\big(V\big(\cW_{\eta}[\rho_{\eta}^{\eps}]\big)\rho_{\eta}^{\eps}\big)+2\rho_{\eta}^{\eps} f'(\rho_{\eta}^{\eps})\partial_{y}\rho_{\eta}^{\eps}\Big)\dd y
 \intertext{and another integration by parts -- recalling that \(\phi\in \sW^{1,\infty}_{\text{c}}((-42,T)\times\R;\R_{\geq0})\) and is thus in particular compactly supported --}
 &=\underbrace{2\iint_{\OT}\!\!\!\!\partial_{y}\phi(t,y)\Big(-\rho_{\eta}^{\eps} V\big(\cW_{\eta}^{\eps}[\rho_{\eta}^{\eps}]\big)\rho_{\eta}^{\eps}+\rho_{\eta}^{\eps} V\big(\rho_{\eta}^{\eps}\big)\rho_{\eta}^{\eps}\Big)\dd y}_{\eqqcolon A}\\
 &\quad \underbrace{-2\iint_{\OT}\!\!\!\!\phi(t,y)\Big(\partial_{y}\rho_{\eta}^{\eps}V\big(\cW_{\eta}[\rho^{\eps}_{\eta}]\big)\rho_{\eta}^{\eps}-\partial_{y}\rho_{\eta}^{\eps}V(\rho_{\eta}^{\eps})\rho_{\eta}^{\eps}\Big)\dd y}_{\eqqcolon B}.
\end{align*}
 The first term \(A\) of the previous computation converges to zero as
 \begin{equation}
 \begin{aligned}
\tfrac{|A|}{2}&=\bigg|\iint_{\OT}\!\!\!\!\partial_{y}\phi(t,y)\Big(-\rho_{\eta}^{\eps} V\big(\cW_{\eta}[\rho_{\eta}^{\eps}]\big)\rho_{\eta}^{\eps}+\rho_{\eta}^{\eps} V\big(\rho_{\eta}^{\eps}\big)\rho_{\eta}^{\eps}\Big)\dd y\bigg|\\
&\leq \|\partial_{y}\phi\|_{\sL^{\infty}(\OT)}\big\|\rho^{\eps}_{\eta}\big\|_{\sL^{\infty}((0,T);\sL^{\infty}(\R))}^{2}\|V'\|_{\sL^{\infty}((0,\|q_{0}^{\eps}\cdot v^{\eps}\|_{\sL^{\infty}(\R)}))}\\
&\qquad\cdot\big\|\cW_{\eta}[\rho^{\eps}_{\eta}]-\rho^{\eps}_{\eta}\big\|_{\sC([0,T];\sL^{1}((-N,N))}
 \end{aligned}
 \label{eq:4212}
 \end{equation}
 with \(N\in\R_{>0}\) be chosen so that \(\supp(\phi)\subset (-N,N)\).

Recalling \cref{eq:identity_W}, we have
\[
\big\|\cW_{\eta}[\rho^{\eps}_{\eta}]-\rho^{\eps}_{\eta}\big\|_{\sC([0,T];\sL^{1}((-N,N))}\leq\eta\sup_{t\in[0,T]}\big|\cW_{\eta}[\rho^{\eps}_{\eta}](t,\cdot)\big|_{\sTV(\R)}
\]
and thanks to \cref{prop:TV_bounds}, \cref{theo:strong_convergence_subsequences} the total variation is bounded uniformly in \((\eta,\eps)\in\R^{2}\) so that for \(\eta\rightarrow0\) this term indeed vanishes.

We continue with the second term and compute furthermore
\begin{align*}
 B&=\iint_{\OT}\phi(t,y)\Big(2\partial_{y}\rho_{\eta}^{\eps}V(\rho_{\eta}^{\eps})\rho_{\eta}^{\eps}-2\partial_{y}\rho_{\eta}^{\eps}V\big(\cW_{\eta}[\rho_{\eta}^{\eps}]\big)\rho_{\eta}^{\eps}\Big)\dd y\\
 &=\iint_{\OT}\phi(t,y)2\partial_{y}\rho_{\eta}^{\eps}V(\rho_{\eta}^{\eps})\rho_{\eta}^{\eps}\dd y-\iint_{\OT}\phi(t,y)2\partial_{y}\rho_{\eta}^{\eps}V\big(W[\rho_{\eta}^{\eps}]\big)\rho_{\eta}^{\eps}\dd y\\
&=\iint_{\OT}\phi(t,y)2\partial_{y}\rho_{\eta}^{\eps}V(\rho_{\eta}^{\eps})\rho_{\eta}^{\eps}\dd y+\iint_{\OT}\phi_{y}(t,y)\big(\rho_{\eta}^{\eps}\big)^{2}V\big(\cW_{\eta}[\rho_{\eta}^{\eps}]\big)\dd y\\ &\quad +\iint_{\OT}\phi(t,y)\big(\rho_{\eta}^{\eps}\big)^{2}V'\big(\cW_{\eta}[\rho_{\eta}^{\eps}]\big)\partial_{y}\cW_{\eta}[\rho_{\eta}^{\eps}]\dd y 
\intertext{adding a zero}
&=\iint_{\OT}\phi(t,y)2\partial_{y}\rho_{\eta}^{\eps}V(\rho_{\eta}^{\eps})\rho_{\eta}^{\eps}\dd y+\iint_{\OT}\phi_{y}(t,y)\big(\rho_{\eta}^{\eps}\big)^{2}V(\rho_{\eta}^{\eps})\dd y\\
&\quad+\iint_{\OT}\phi_{y}(t,y)\big(\rho_{\eta}^{\eps}\big)^{2}\Big(V\big(\cW_{\eta}[\rho_{\eta}^{\eps}]\big)-V(\rho_{\eta}^{\eps})\Big)\dd y\\
&\quad+\iint_{\OT}\phi(t,y)\big(\rho_{\eta}^{\eps}\big)^{2}V'\big(\cW_{\eta}[\rho_{\eta}^{\eps}]\big)\partial_{y}\cW_{\eta}[\rho_{\eta}^{\eps}]\dd y 
\intertext{and an integration by parts in the first term}
&=-\iint_{\OT}\phi(t,y)\big(\rho_{\eta}^{\eps}\big)^{2}V'(\rho_{\eta}^{\eps})\partial_{y}\rho_{\eta}^{\eps}\dd y\\
&\quad+\iint_{\OT}\phi_{y}(t,y)\big(\rho_{\eta}^{\eps}\big)^{2}\Big(V\big(\cW_{\eta}[\rho_{\eta}^{\eps}]\big)-V(\rho_{\eta}^{\eps})\Big)\dd y\\
&\quad+\iint_{\OT}\phi(t,y)\big(\rho_{\eta}^{\eps}\big)^{2}V'\big(\cW_{\eta}[\rho_{\eta}^{\eps}]\big)\partial_{y}\cW_{\eta}[\rho_{\eta}^{\eps}]\dd y
\intertext{using the identity \(\rho_{\eta}^{\eps}\equiv \cW_{\eta}[q_{\eta}^{\eps}]-\eta\partial_{2}\cW_{\eta}[q_{\eta}^{\eps}]\) as derived in \cref{eq:identity_W} on the last summand}
&=-\iint_{\OT}\phi(t,y)\big(\rho_{\eta}^{\eps}\big)^{2}V'(\rho_{\eta}^{\eps})\partial_{y}\rho_{\eta}^{\eps}\dd y+\iint_{\OT}\phi_{y}(t,y)\big(\rho_{\eta}^{\eps}\big)^{2}\Big(V\big(\cW_{\eta}[\rho_{\eta}^{\eps}]\big)-V(\rho_{\eta}^{\eps})\Big)\dd y\\
&\quad+\iint_{\OT}\phi(t,y)\big(\cW_{\eta}[q_{\eta}^{\eps}]-\eta\partial_{y}\cW_{\eta}[q_{\eta}^{\eps}]\big)^{2}V'\big(\cW_{\eta}[\rho_{\eta}^{\eps}]\big)\partial_{y}\cW_{\eta}[\rho_{\eta}^{\eps}]\dd y\\
&=-\iint_{\OT}\phi(t,y)\big(\rho_{\eta}^{\eps}\big)^{2}V'(\rho_{\eta}^{\eps})\partial_{y}\rho_{\eta}^{\eps}\dd y+\iint_{\OT}\phi_{y}(t,y)\big(\rho_{\eta}^{\eps}\big)^{2}\Big(V\big(\cW_{\eta}[\rho_{\eta}^{\eps}]\big)-V(\rho_{\eta}^{\eps})\Big)\dd y\\
&\quad+\iint_{\OT}\phi(t,y)\cW_{\eta}[q_{\eta}^{\eps}]^{2}V'\big(\cW_{\eta}[\rho_{\eta}^{\eps}]\big)\partial_{y}\cW_{\eta}[\rho_{\eta}^{\eps}]\dd y\\
&\quad-2\eta\iint_{\OT}\phi(t,y)\cW_{\eta}[q_{\eta}^{\eps}]V'\big(\cW_{\eta}[\rho_{\eta}^{\eps}]\big)\partial_{y}\cW_{\eta}[\rho_{\eta}^{\eps}]^2\dd y\\
&\quad +\eta^{2}\iint_{\OT}\phi(t,y)\partial_{y}\cW_{\eta}[q_{\eta}^{\eps}]^{3}V'\big(\cW_{\eta}[\rho_{\eta}^{\eps}]\big)\dd y
\intertext{and resorting terms}
&=-\iint_{\OT}\phi(t,y)\big(\rho_{\eta}^{\eps}\big)^{2}V'(\rho_{\eta}^{\eps})\partial_{y}\rho_{\eta}^{\eps}\dd y+\iint_{\OT}\phi(t,y)\cW_{\eta}[\rho_{\eta}^{\eps}]^{2}V'\big(\cW_{\eta}[\rho_{\eta}^{\eps}]\big)\partial_{y}\cW_{\eta}[\rho_{\eta}^{\eps}]\dd y\\
&\quad+\iint_{\OT}\phi_{y}(t,y)\big(\rho_{\eta}^{\eps}\big)^{2}\Big(V\big(\cW_{\eta}[\rho_{\eta}^{\eps}]\big)-V(\rho_{\eta}^{\eps})\Big)\dd y\\
&\quad+\eta\iint_{\OT}\phi(t,y)V'\big(\cW_{\eta}[\rho_{\eta}^{\eps}]\big)\partial_{y}\cW_{\eta}[\rho_{\eta}^{\eps}]^2\Big(\eta\partial_{y}\cW_{\eta}[\rho_{\eta}^{\eps}]-2\cW_{\eta}[\rho_{\eta}^{\eps}]\Big)\dd y
\intertext{using \(-2\cW_{\eta}[\rho_{\eta}^{\eps}]+\eta\partial_{2}\cW_{\eta}[\rho_{\eta}^{\eps}]\equiv -\cW_{\eta}[\rho_{\eta}^{\eps}] -\rho_{\eta}^{\eps}\leqq 0\) according to \cref{eq:identity_W} and \(V'\leqq 0\) as well as \(\phi\geqq 0\)}
&\geq-\iint_{\OT}\phi(t,y)\big(\rho_{\eta}^{\eps}\big)^{2}V'(\rho_{\eta}^{\eps})\partial_{y}\rho_{\eta}^{\eps}\dd y+\iint_{\OT}\phi(t,y)\cW_{\eta}[\rho_{\eta}^{\eps}]^{2}V'\big(\cW_{\eta}[\rho_{\eta}^{\eps}]\big)\partial_{y}\cW_{\eta}[\rho_{\eta}^{\eps}]\dd y\\
&\quad+\iint_{\OT}\phi_{y}(t,y)\big(\rho_{\eta}^{\eps}\big)^{2}\Big(V\big(\cW_{\eta}[\rho_{\eta}^{\eps}]\big)-V(\rho_{\eta}^{\eps})\Big)\dd y.
 \end{align*}
The third term converges to zero for \(\eta\rightarrow 0\) following the estimate in \cref{eq:4212} and the first two terms can be written as
\begin{align}
    &\iint_{\OT}\phi(t,y)\cW_{\eta}[\rho_{\eta}^{\eps}]^{2}V'\big(\cW_{\eta}[\rho_{\eta}^{\eps}]\big)\partial_{y}\cW_{\eta}[\rho_{\eta}^{\eps}]\dd y-\iint_{\OT}\phi(t,y)\big(\rho_{\eta}^{\eps}\big)^{2}V'(\rho_{\eta}^{\eps})\partial_{y}\rho_{\eta}^{\eps}\dd y\notag\\
    &= \iint_{\OT}\phi_{y}(t,y)\big(G(\rho_{\eta}^{\eps})-G(\cW_{\eta}[\rho_{\eta}^{\eps}])\big)\dd y\label{eq:424242}
\end{align}
with 
\[
G:\begin{cases}\R&\rightarrow \R\\
            x&\mapsto \int_{0}^{x}s^{2}V'(s)\dd s.
            \end{cases}
\]
Obviously, \(G\in\sW^{1,\infty}_{\loc}(\R)\) and again -- in analogy to \cref{eq:4212} -- we can let \(\eps\rightarrow 0\) and later on \(\eta\rightarrow0\) to have
\begin{align*}
    |\eqref{eq:424242}|\leq \|\phi_{y}\|_{\sL^{\infty}((\OT))}\|G'\|_{\sL^{\infty}((0,\|vq_{0}\|_{\sL^{\infty}(\R)}))}\|\rho_{\eta}-\cW_{\eta}[\rho_{\eta}]\|_{\sC([0,T];\sL^{1}(\R))}\overset{\eta\rightarrow 0}{\longrightarrow} 0.
\end{align*}
This results in the fact that we obtain
\[
\lim_{\eta\rightarrow 0}\mEF\big[\phi,(\cdot)^{2},q_{\eta}\big]\geq 0
\]
and thus, the nonlocal solution satisfies in the limit \(\eta\rightarrow 0\) the Entropy condition.
 \end{proof}
\section{Convergence of the solution of the nonlocal discontinuous conservation law to the solution of the corresponding local conservation law}
 In this short section, we present the main result of this paper with all necessary conditions.
 \begin{theorem}[Convergence -- discontinuous nonlocal to local]\label{theo:convergence}
Let \(v\in\sL^{\infty}(\R;\R_{\geq \uv}),\ \uv\in\R_{>0}\) be given and assume in addition the OSL condition \cref{eq:OSL}, i.e.\ 
\[
\exists L\in\R\ \forall x,y\in\R,\ x\geq y:\ v(x)-v(y)\leq L(x-y) 
\]
Let furthermore \(q_{0}\in\sBV(\R;\R_{\geq0})\) be given and \(V\) may satisfy  \cref{eq:strict_concavity}, i.e., 
\[
x\mapsto x V(x) \text{ strictly concave }\Longleftrightarrow 2V'(x)+x V''(x)<0,\ \forall x\in \big[0, \|v\cdot q_{0}\|_{\sL^{\infty}(\R)}\big].
\]
Then, the solution of the discontinuous nonlocal conservation law \(q_{\eta}\in \sC\big([0,T];\sL^{1}(\R)\big)\) in \cref{defi:discontinuous_nonlocal_conservation_law} converges in \(\sC\big([0,T];\sL^{1}_{\loc}(\R)\big)\) to the Entropy solution of the (local) discontinuous conservation law \(q_{*}\in \sC\big([0,T];\sL^{1}_{\loc}(\R)\big)\) in \cref{defi:discontinous_local} if \(\eta\in\R_{>0}\) approaches zero, in formulae
\[
\lim_{\eta\rightarrow 0}\big\|q_{\eta}-q_{*}\big\|_{\sC([0,T];\sL^{1}_{\loc}(\R))}=0.
\]
Even more, also the nonlocal term converges in the following sense
\[
\lim_{\eta\rightarrow 0}\big\|\cW_{\eta}[v\cdot q_{\eta}]-v\cdot q_{*} \big\|_{\sC([0,T];\sL^{1}_{\loc}(\R))}=0.
\]
 \end{theorem}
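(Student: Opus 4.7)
The plan is to assemble the pieces already in place and let uniqueness promote subsequential convergence to convergence of the full family. First, I invoke \cref{theo:strong_convergence_subsequences}, whose hypotheses---the OSL condition on $v$ and the $\sTV$ (hence $\sBV$) regularity of $q_{0}$ built into \cref{ass:input_datum}---are exactly those assumed here. That theorem delivers a subsequence $(\eta_{k})_{k\in\N}\subset \R_{>0}$ with $\eta_{k}\to 0$, a limit $q_{*}\in \sC([0,T];\sL^{1}(\R))$, and the two strong convergences
\[
\lim_{k\to\infty}\|q_{\eta_{k}}-q_{*}\|_{\sC([0,T];\sL^{1}_{\loc}(\R))}=0,\qquad \lim_{k\to\infty}\|\cW_{\eta_{k}}[v\cdot q_{\eta_{k}}]-v\cdot q_{*}\|_{\sC([0,T];\sL^{1}_{\loc}(\R))}=0.
\]

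Second, I apply \cref{theo:convergence_entropy_solutions}: its hypothesis is precisely the strict concavity condition \cref{eq:strict_concavity} assumed in the statement, and its input is the strong convergence just obtained. The conclusion is that the limit $q_{*}$ is a weak Entropy solution of the discontinuous local conservation law from \cref{defi:local_conservation_laws} in the sense of \cref{defi:entropy_solution}.

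Third, I upgrade from a subsequence to the whole family. By \cref{lem:existence_entropy} the weak Entropy solution is unique, so $q_{*}$ is determined by the data alone and does not depend on the chosen subsequence. Given any subsequence of $(q_{\eta})_{\eta>0}$, re-running the above pipeline (first extract a convergent sub-subsequence by \cref{theo:strong_convergence_subsequences}, then identify its limit as the unique Entropy solution by \cref{theo:convergence_entropy_solutions} and \cref{lem:existence_entropy}) produces a further subsequence converging to the same $q_{*}$. By the standard Urysohn-type subsequence criterion for convergence in a metric space, the full family satisfies $q_{\eta}\to q_{*}$ in $\sC([0,T];\sL^{1}_{\loc}(\R))$.

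Finally, the convergence of the nonlocal term comes for free. The identity \cref{eq:identity_W} gives $v(x)q_{\eta}(t,x)=\cW_{\eta}[v\cdot q_{\eta}](t,x)-\eta\,\partial_{x}\cW_{\eta}[v\cdot q_{\eta}](t,x)$, and the uniform total variation bound $\sup_{\eta\in(0,1)}|\cW_{\eta}[v\cdot q_{\eta}](t,\cdot)|_{\sTV(\R)}<\infty$ from \cref{theo:strong_convergence_subsequences} yields $\|v\cdot q_{\eta}-\cW_{\eta}[v\cdot q_{\eta}]\|_{\sC([0,T];\sL^{1}_{\loc}(\R))}=O(\eta)\to 0$; combining this with the already-established convergence $v\cdot q_{\eta}\to v\cdot q_{*}$ (which follows from the first claim and $v\in\sL^{\infty}(\R)$) proves the second assertion. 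No real obstacle arises, since the substantive analytic work is concentrated in \cref{theo:strong_convergence_subsequences,theo:convergence_entropy_solutions}; the only conceptual point is invoking uniqueness to pass from subsequential to full convergence.
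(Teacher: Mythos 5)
Your proposal is correct and follows essentially the same route as the paper: extract a convergent subsequence via \cref{theo:strong_convergence_subsequences}, identify the limit as the unique Entropy solution via \cref{theo:convergence_entropy_solutions} and \cref{lem:existence_entropy}, and upgrade to full convergence by the standard subsequence argument. Your added detail on the nonlocal-term convergence via \cref{eq:identity_W} and the uniform $\sTV$ bound is exactly the mechanism the paper relies on as well.
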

 \begin{proof}
 For subsequences \((\eta_{k})_{k\in\N},\) this is a direct consequence of \cref{theo:convergence_entropy_solutions} and \cref{theo:strong_convergence_subsequences}. Thanks to the fact that the local entropy limit is unique, we can extend it straight forward to any sequence converging to zero.
 \end{proof}
 \begin{remark}[Convergence and generalizations]
Several points are in order to be made:
\begin{description}
    \item[\textbf{Convergence for each sequence}:] The convergence result in \cref{theo:convergence} holds for each sequence \(\eta_{k}\) converging to zero, and is thus a general approximation result.
    \item[\textbf{Convergence for more general kernels}:] As pointed out before, the methods used here are heavily inspired by \cite{coclite2022general}, but it is very likely that they can be extended to general kernels as done in \cite{colombo2022nonlocal} for the case without discontinuities. We do not go into details.
\end{description}
 \end{remark}

 \section{Numerical illustrations}\label{sec:numerics}
 In this section, we demonstrate numerically the claimed convergence in \cref{theo:convergence} and illustrate that the theorem might hold in more generality.
The numerical algorithm used is the one discussed in \cite{pflug2} which is non-dissipative, conserving the \(\sL^{1}\) mass by definition and which is based on the method of characteristics.

In the first example illustrated in \cref{fig:1} where we have a jump discontinuity located at \(x=0\) jumping downwards from \(\tfrac{3}{2}\) to \(\tfrac{1}{2}\) we can observe the suggested convergence and also the building of a rarefaction smoothing the introduced jump-discontinuity emanating from \(x=0\) over time.
 in the bottom row, the solution is scaled with the spatial discontinuity $v$ and one can observe the depicted \(v\cdot q\) satisfied the maximum principle as well as again the rarefaction forming for smaller \(\eta\) around \(x=0\).
In the two most left pictures, one can observe a kink at \(x=0\) around \(t\approx 0.35\) which comes from the fact that the mass has overstepped the discontinuous velocity at \(x=0\) so that it moves slower (actually \(\tfrac{1}{3}\) of the ``previous'' velocity).

In the second example illustrated in \cref{fig:2}, the jump, again located at \(x=0\), upwards from \(\tfrac{1}{2}\) to \(\tfrac{3}{2}\). And indeed, this causes the density right of \(x=0\) to move significantly faster then the left hand side. The blue triangle (where the density is \(\approx \tfrac{1}{6}\)) stems from the fact that density at \(x\leq 0\) enters this region and is scaled down by a factor of \(\tfrac{1}{3}\) as the density right of \(x=0\) moves with three times the speed in comparison to the density left of \(x=0\). Again, for smaller \(\eta\) the convergence can be observed (however, we could only show convergence only for discontinuous velocities which cannot jump upwards, see \cref{theo:strong_convergence_subsequences}).  The maximum principle for the quantity \(q\cdot v\) can be observed.
It can also be observed that in contrast to the examples in \cref{fig:1} no rarefaction at \(x=0\) builds up over time, which is in line with Oleinik's Entropy \cite{oleinik,oleinik_english} condition.

 \begin{figure}
 \centering
  
 \includegraphics[scale=0.4,trim=0 25 15 5,clip]{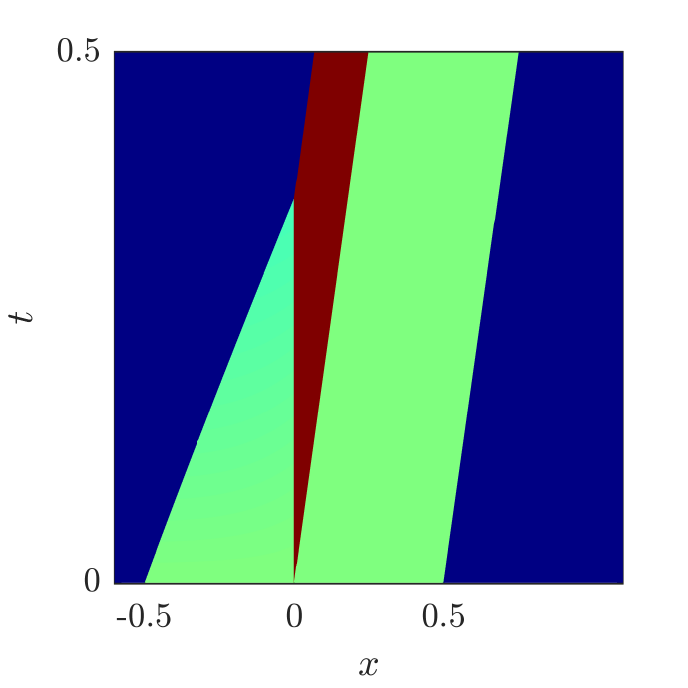}
  \includegraphics[scale=0.4,trim=30 25 15 5,clip]{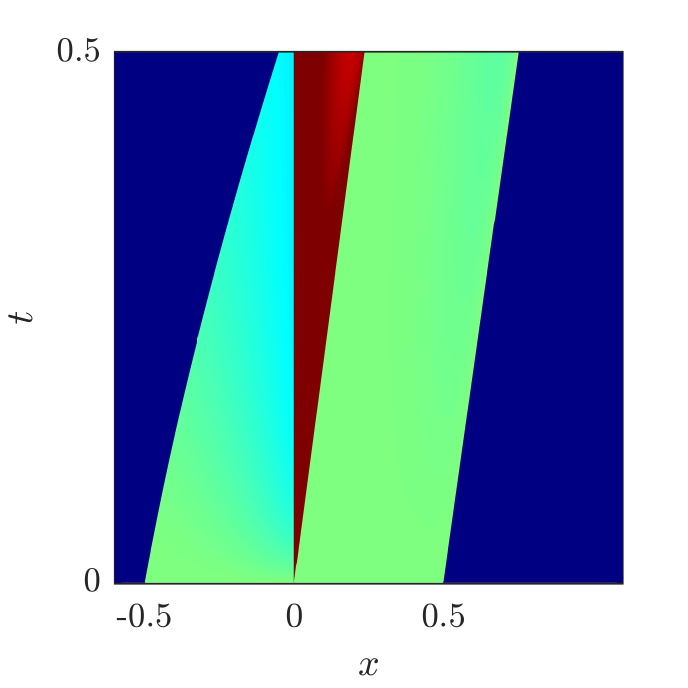}
   \includegraphics[scale=0.4,trim=30 25 15 5,clip]{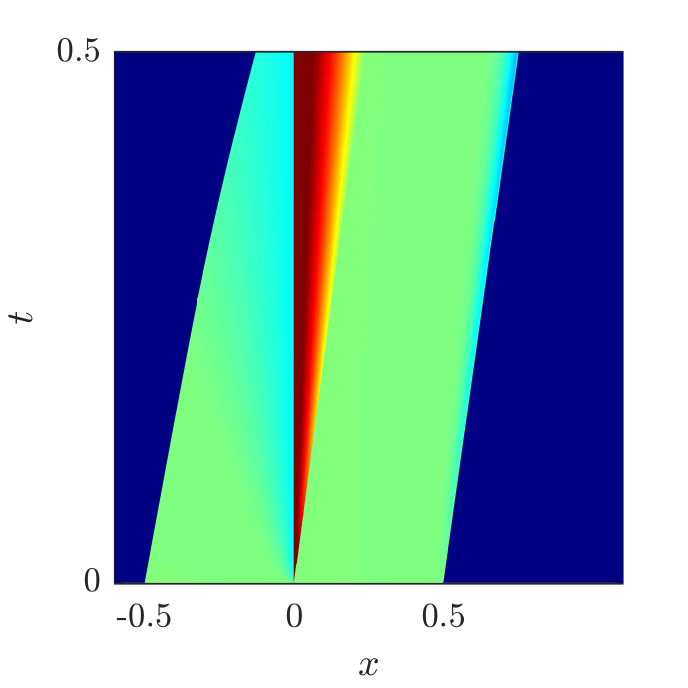}
     \includegraphics[scale=0.4,trim=30 25 15 5,clip]{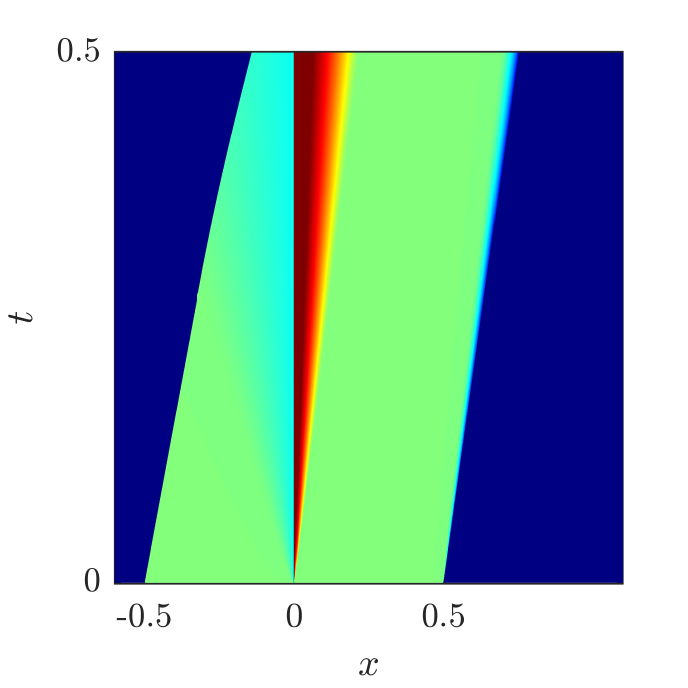}
        \includegraphics[scale=0.4,trim=30 25 10 0,clip]{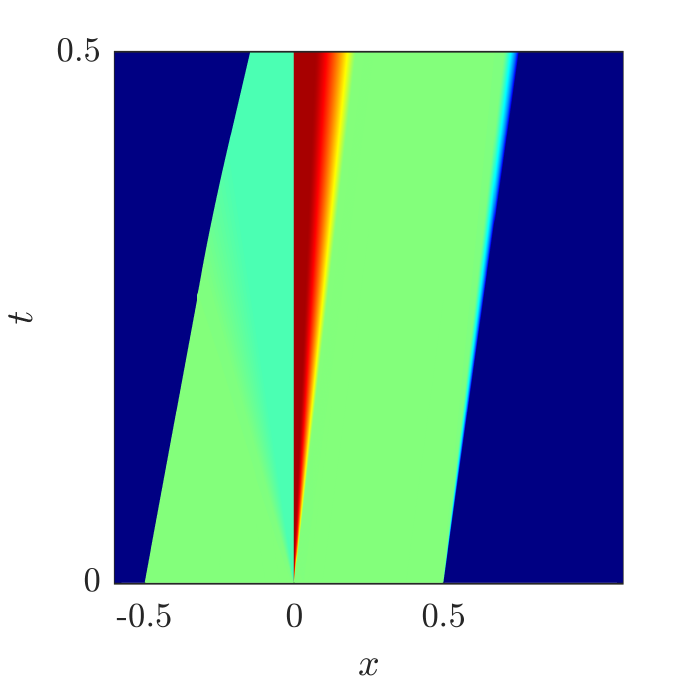}

 \includegraphics[scale=0.4,trim=0 0 15 5,clip]{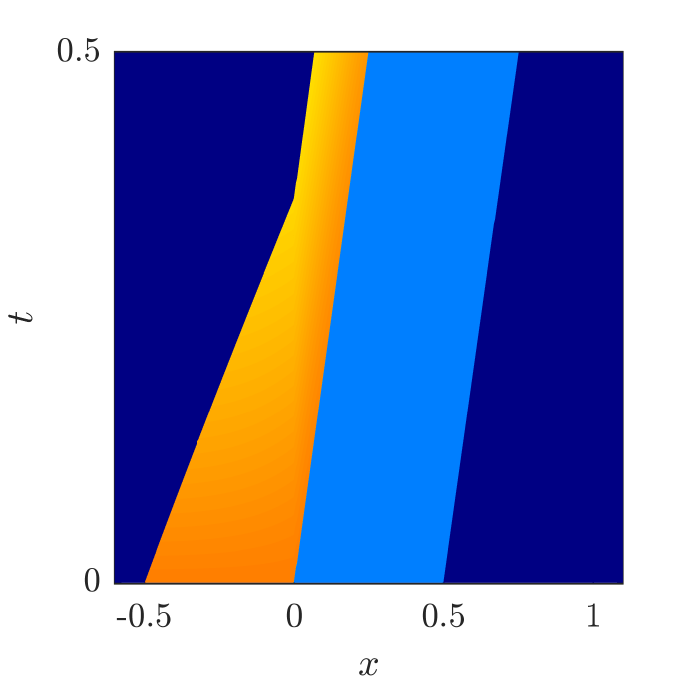}
  \includegraphics[scale=0.4,trim=30 0 15 5,clip]{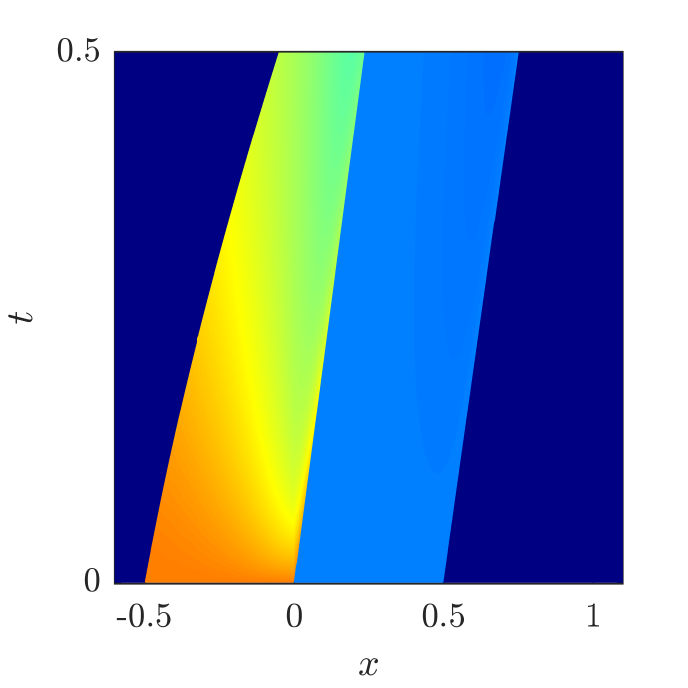}
\includegraphics[scale=0.4,trim=30 0 15 5,clip]{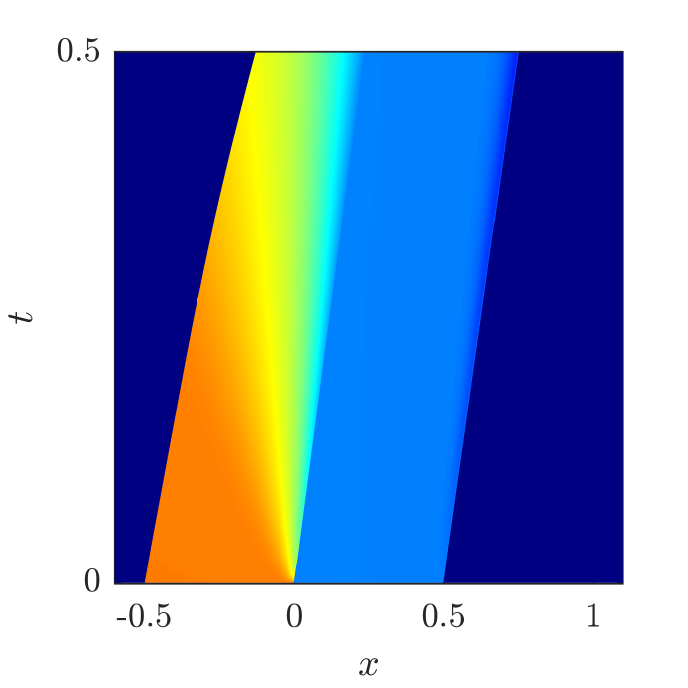}
 \includegraphics[scale=0.4,trim=30 0 15 5,clip]{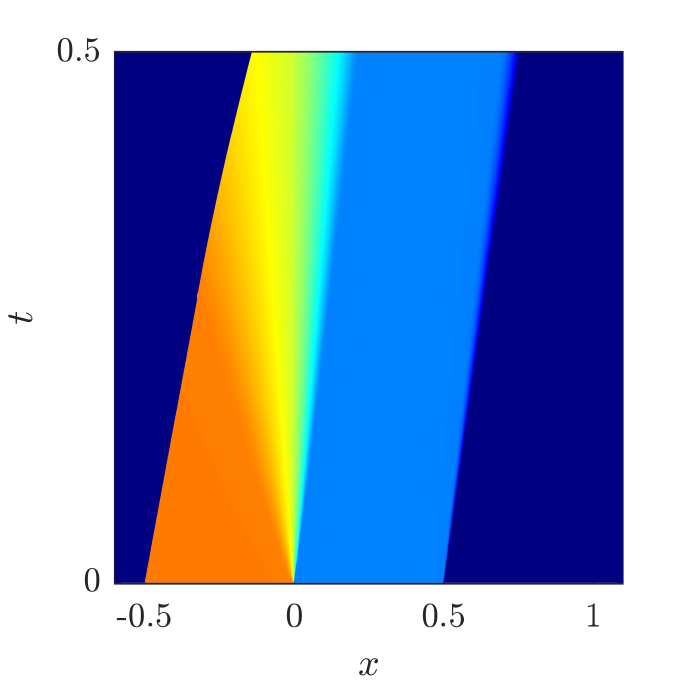}
\includegraphics[scale=0.4,trim=30 0 15 5,clip]{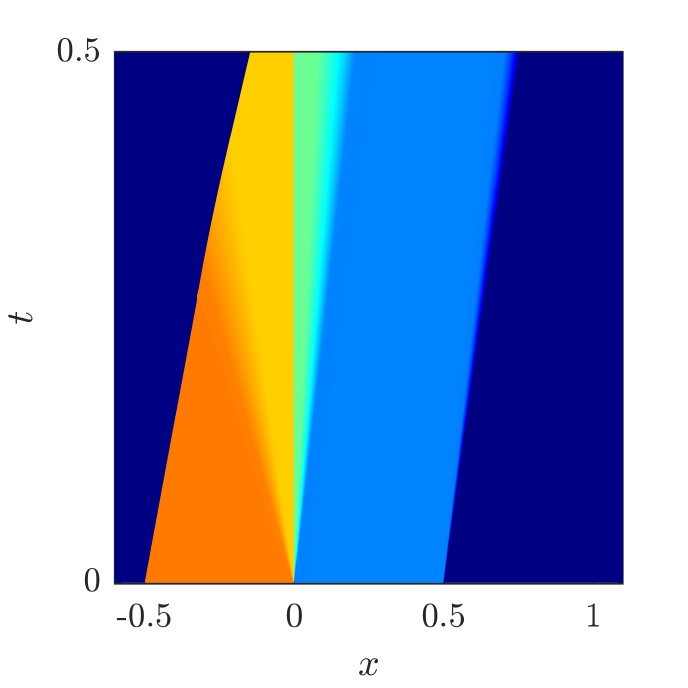}
        
    \caption{Velocity \(V(x)=1-x^{2}\), discontinuity \(v(x)=\tfrac{1}{2}+\chi_{\R_{<0}}(x),\ q_{0}(x)=\tfrac{1}{2}\chi_{[-0.5,0.5]}(x),\ x\in\R\),\ \(\eta\in \big\{10^{-i},i\in\{0,1,2,3,4\}\big\}\) from left to right. In the upper row, the solution \(q\) as in \cref{defi:discontinuous_nonlocal_conservation_law} is illustrated, while in the second row, \(\rho\equiv v\cdot q\) is shown (on which according to \cref{theo:existence_uniqueness_maximum} the maximum principle holds). Colorbar: $0\ $\protect\includegraphics[width=2cm]{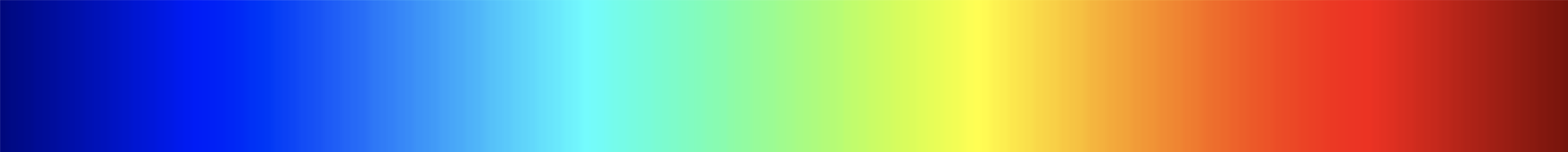}$\ 1$}
    \label{fig:1}
\end{figure}

\begin{figure}
 \centering
  
 \includegraphics[scale=0.4,trim=0 25 15 5,clip]{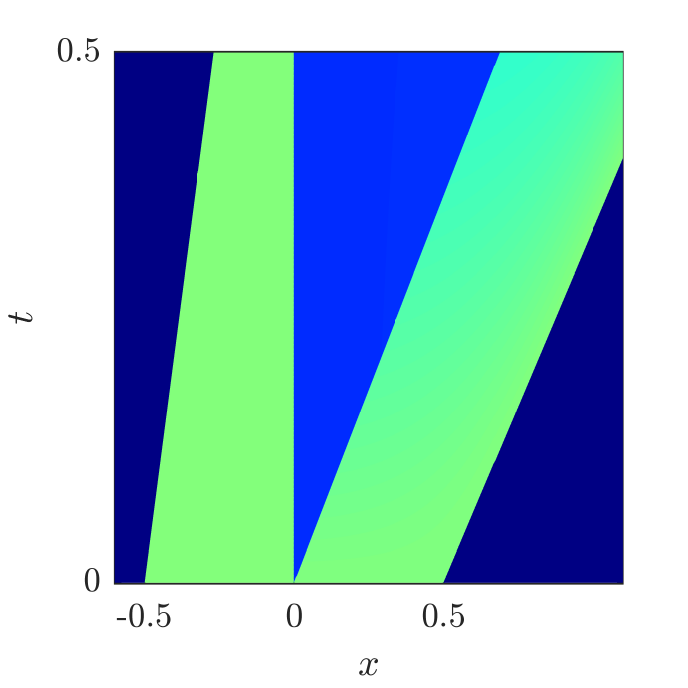}
  \includegraphics[scale=0.4,trim=30 25 15 5,clip]{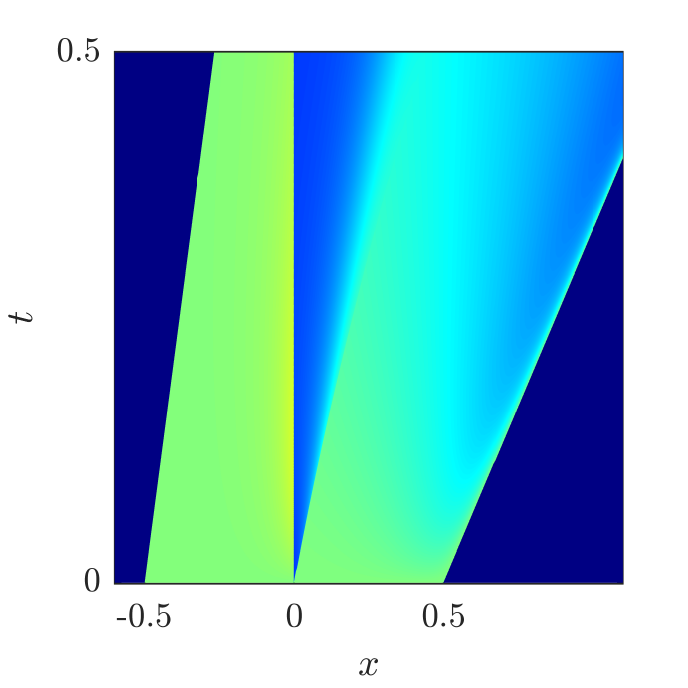}
   \includegraphics[scale=0.4,trim=30 25 15 5,clip]{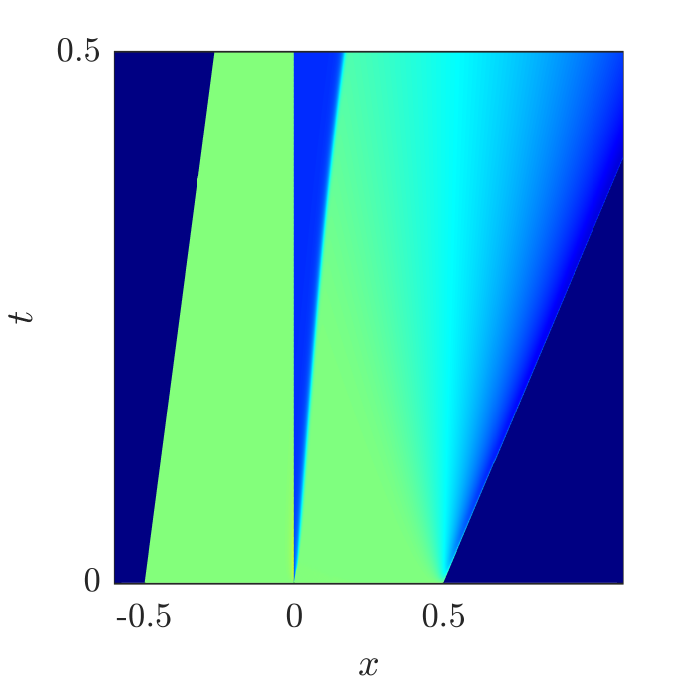}
     \includegraphics[scale=0.4,trim=30 25 15 5,clip]{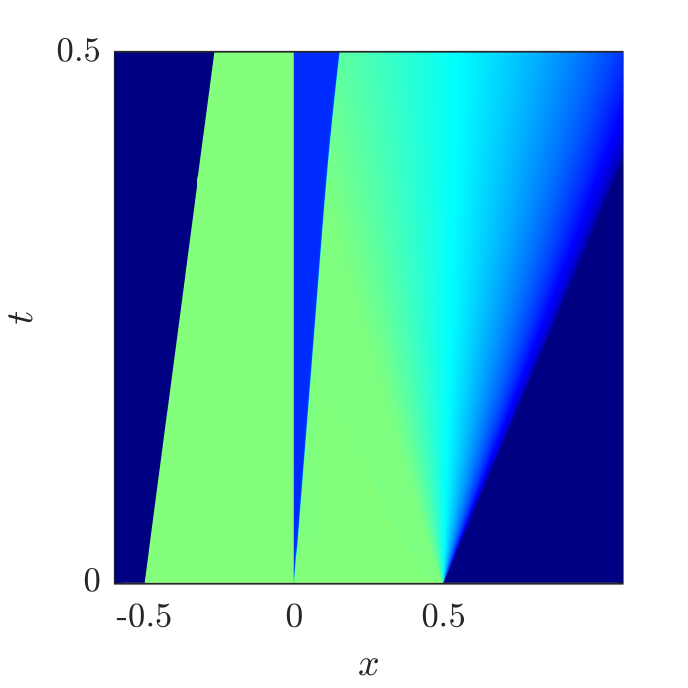}
        \includegraphics[scale=0.4,trim=30 25 10 0,clip]{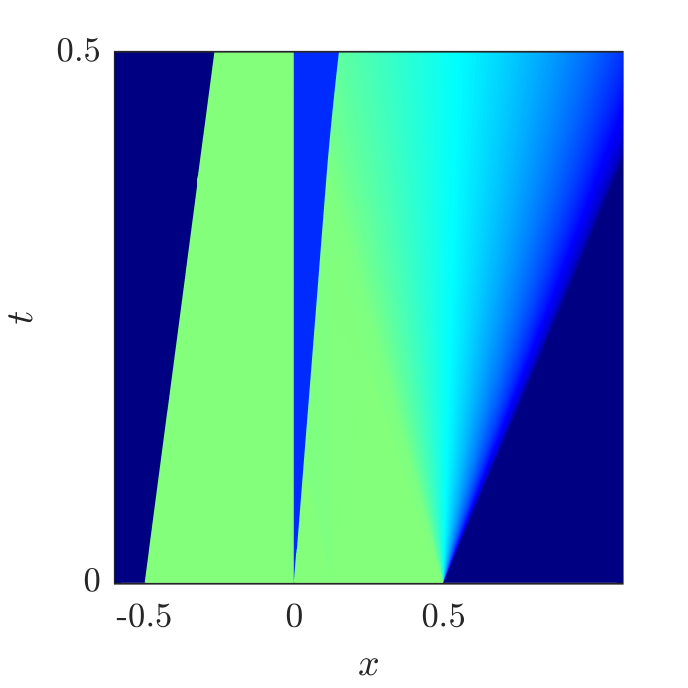}

 \includegraphics[scale=0.4,trim=0 0 15 5,clip]{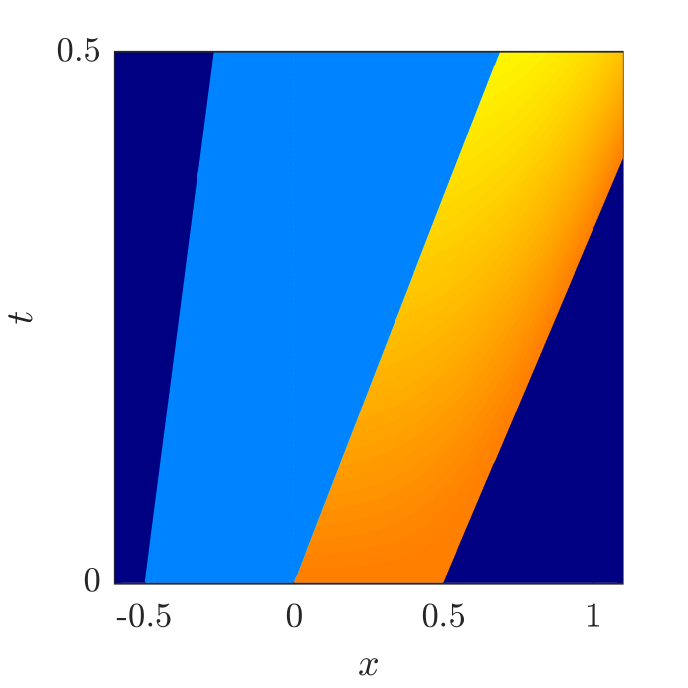}
  \includegraphics[scale=0.4,trim=30 0 15 5,clip]{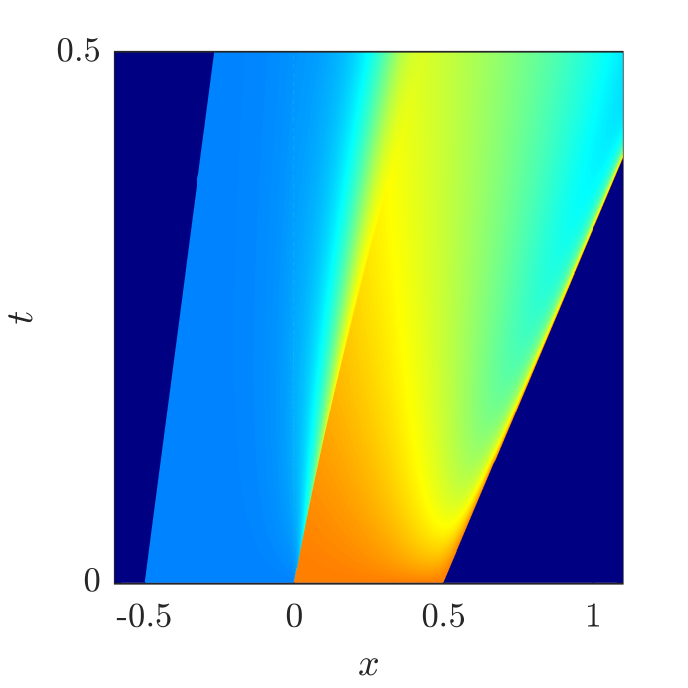}
\includegraphics[scale=0.4,trim=30 0 15 5,clip]{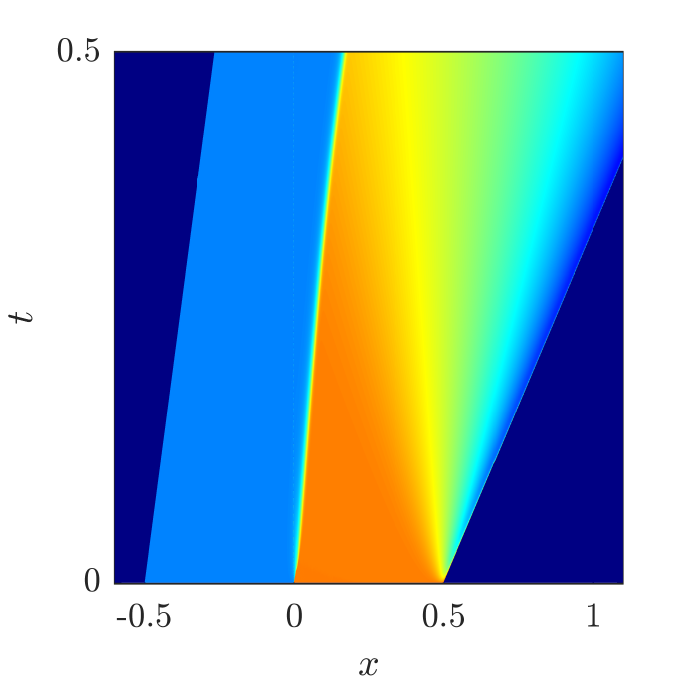}
 \includegraphics[scale=0.4,trim=30 0 15 5,clip]{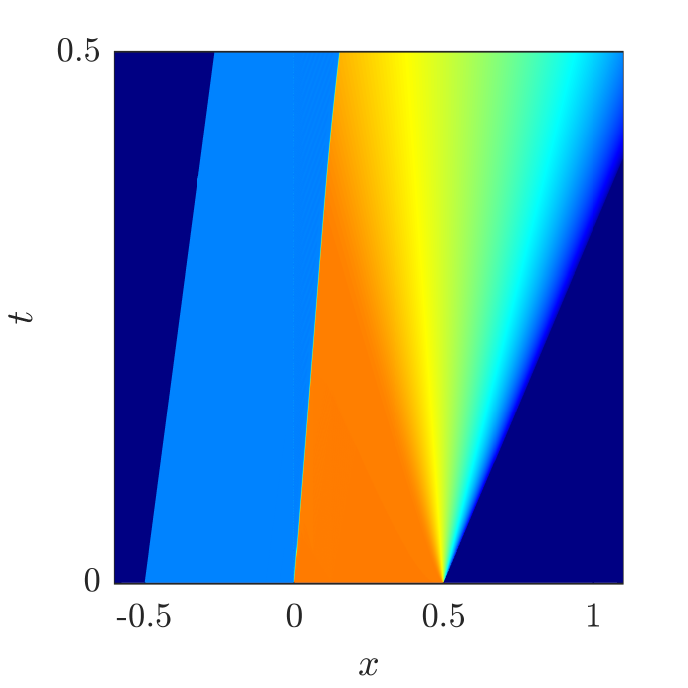}
\includegraphics[scale=0.4,trim=30 0 15 5,clip]{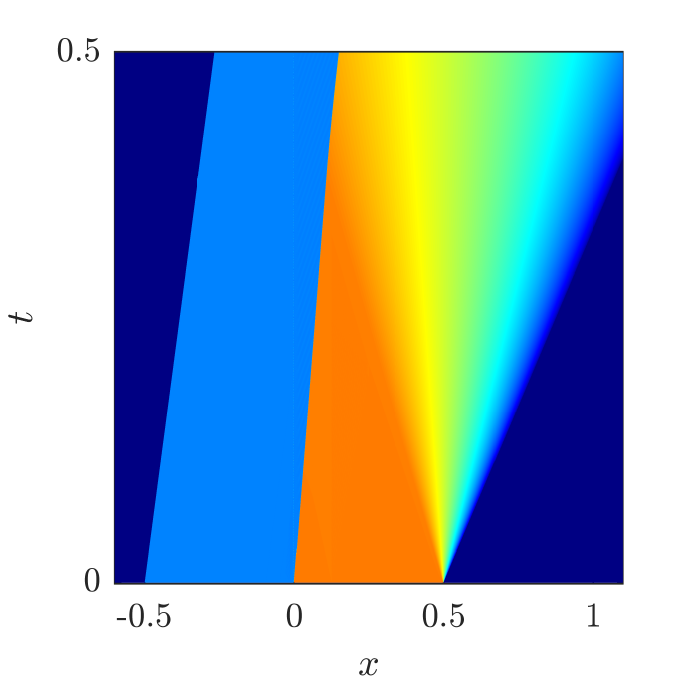}
\caption{Velocity \(V(x)=1-x^{2}\), discontinuity \(v(x)=\tfrac{1}{2}+\chi_{\R_{>0}}(x),\ q_{0}(x)=\tfrac{1}{2}\chi_{[-0.5,0.5]}(x),\ x\in\R\),\ \(\eta\in \big\{10^{-i},i\in\{0,1,2,3,4\}\big\}\) from left to right. In the upper row, the solution \(q\) as in \cref{defi:discontinuous_nonlocal_conservation_law} is illustrated, while in the second row, \(\rho\equiv v\cdot q\) is shown (on which according to \cref{theo:existence_uniqueness_maximum} the maximum principle holds). Colorbar: $0\ $\protect\includegraphics[width=2cm]{figures_discontinuous/ColorbarJet.png}$\ 1$}
\label{fig:2}
\end{figure}

\section{Conclusions and future work}
 We have established the convergence of solutions of a specific discontinuous nonlocal conservation law where the discontinuity is not only located in the velocity but also in the nonlocal term as a type of weight, and the nonlocal kernel approaches a Dirac distribution. On the local side, the emerging discontinuous local conservation law could be diffeomorphically transformed into a classical conservation law without discontinuity where typical Entropy conditions yield existence and uniqueness. However, this transform was not possible for the nonlocal discontinuous conservation law which justifies the outlined analysis furthermore.
 Typical \(\sTV\) estimates on the solution itself were not successful and proved in previous contributions as actual impossible \cite{COLOMBO20211653}, however, by relying on \(\sTV\) estimates of the nonlocal term we obtained strong convergence in \(\sC(\sL^{1}_{\loc})\).

 The application of this result is currently only driven of the mathematical interest to better understand the singular limit problem, but there might be a further interpretation as the discontinuous nonlocal conservation law might prove to be a rather good approximation for traffic flow with different road capacities/velocities (at the discontinuities). However, this requires a more application driven study on the suggested dynamics which is out of scope of this contribution.

\bibliographystyle{plain}
\bibliography{biblio}
\end{document}